\renewcommand*\subjclass[2][2000]{%
  \def\@subjclass{#2}%
  \@ifundefined{subjclassname@#1}{%
    \ClassWarning{\@classname}{Unknown edition (#1) of Mathematics
      Subject Classification; using '1991'.}%
  }{%
    \@xp\let\@xp\subjclassname\csname subjclassname@#1\endcsname
  }%
}
\newtheorem{theorem}{Theorem}[section]
\newtheorem{lemma}[theorem]{Lemma}
\newtheorem*{lemma*}{Lemma}
\newtheorem{proposition}[theorem]{Proposition}
\newtheorem{corollary}[theorem]{Corollary}
\def\1ton{1,2,\ldots,n}
\def\Div{{\rm Div}}
\def\d{\textnormal d}
\newcommand{\norm}{\,|\!|\,}
\newcommand{\bydef}{\stackrel{{\rm def}}{=\!\!=}}
\newcommand{\onto}{\xrightarrow[]{{}_{\!\!\textnormal{onto}\!\!}}}
\newcommand{\Tr}{\text{Tr}}
\newcommand{\A}{\mathbb{A}}
\newcommand{\X}{\mathbb{X}}
\newcommand{\W}{\mathscr{W}}
\theoremstyle{definition}
\newtheorem{conjecture}[theorem]{Conjecture}
\theoremstyle{remark}
\newtheorem{remark}[theorem]{Remark}
\numberwithin{equation}{section}
\newcommand{\abs}[1]{\lvert#1\rvert}
\newcommand{\C}{\mathbb{C}}
\newcommand{\E}{\mathcal{E}}
 \DeclareMathOperator{\re}{Re}
\DeclareMathOperator{\im}{Im} \DeclareMathOperator{\id}{id}
\DeclareMathOperator{\Mod}{Mod}
\def\XXint#1#2#3{{\setbox0=\hbox{$#1{#2#3}{\int}$}
\vcenter{\hbox{$#2#3$}}\kern-.5\wd0}}
\def\ge{\geqslant}
\begin{document}


\title[Lipschitz property of minimisers between surfaces]{Lipschitz property of minimisers between double connected surfaces}  \subjclass{Primary 31A05;
Secondary 42B30 }


\keywords{Minimizers, Lipschitz mappings, Annuli}
\author{David Kalaj}
\address{University of Montenegro, Faculty of Natural Sciences and
Mathematics, Cetinjski put b.b. 81000 Podgorica, Montenegro}
\email{davidk@ac.me}

\begin{abstract}
We study the  global Lipschitz character of minimisers of the Dirichlet energy of diffeomorphisms between  doubly connected domains with smooth boundaries from Riemann surfaces. The key point of the proof is the fact that minimisers are certain Noether harmonic maps,  with Hopf differential of special form, a
 fact invented by Iwaniec, Koh, Kovalev and Onninen in \cite{inv} for Euclidean metric and by the author in \cite{calculus} for the arbitrary metric, which
 depends deeply on a result of Jost \cite{Job1}. 
\end{abstract}
\maketitle
\tableofcontents

\section{Introduction and overview}\label{intsec}
Let $0<r<R$, $0<r_\ast<R_\ast$ and let  $\mathbb{X}$ and $\mathbb{Y}$ be two domains  in the complex plane $\mathbf{C}\cong\mathbf{R}^2$. Let $\rho$  be a continuous function on the closure of $\mathbb{Y}$.
The $\rho-$ Dirichlet energy integral of a mapping $h\in \W^{1,2}(\mathbb{X}, \mathbb{Y})$  is defined by \begin{equation}\label{penergy}\mathscr{E}^\rho[h]=\int_{X} \rho(h(z)) \|Dh(z)\|^2 dz.\end{equation} The central aim of this paper is to get some boundary regularity of the  minimizer of  the $\rho-$ energy integral of homomorphisms from the Sobolev class $\W^{1,2}(\mathbb{X}, \mathbb{Y})$.

The main result of this paper is 

\begin{theorem} \label{mainexistq}
Suppose  that $D$ and $\Omega$ are double  connected domains in
$\C$ with $C^2$ boundaries and let $\rho\in C^2(\Omega)$ be a real nonvanishing function in the closure of $\Omega$. Then every energy minimising  diffeomorphism  of
$\rho-$ energy between $D$ and $ \Omega$,
is  Lipschitz  continuous up to the boundary of  $D$. However it is not bi-Lipschitz in general.
\end{theorem}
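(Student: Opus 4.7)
\emph{Reduction and Hopf-differential structure.} The plan is first to reduce to the round annulus. By Koebe's theorem, $D$ is conformally equivalent to a round annulus $A=A(r,R)=\{r<|z|<R\}$ for some $0<r<R$, and since $\partial D\in C^2$, Kellogg's theorem extends this equivalence to a $C^{1,\alpha}$ diffeomorphism of closures. The $\rho$-energy is invariant under precomposition with a conformal map, so it suffices to prove the Lipschitz bound for a minimiser $h\colon A\to\Omega$ and then transport the bound to $D$ using the uniformiser. The structural results cited in the abstract (from \cite{inv,calculus,Job1}) give that $h$ is an interior $C^2$ $\rho$-harmonic diffeomorphism, extends to a homeomorphism $\bar A\to\bar\Omega$ carrying boundary components to boundary components, and satisfies the Noether identity
\[
z^2\,\rho(h(z))\,h_z(z)\,\overline{h_{\bar z}(z)}\;\equiv\; c\in\IR.
\]
Taking moduli yields $\rho(h)|h_z||h_{\bar z}|\le|c|/r^2$ on $A$; this bounds the product of $|h_z|$ and $|h_{\bar z}|$, but not either factor on its own.

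\emph{Boundary flattening and reflection.} Fix $z_0\in\partial A$ and $w_0=h(z_0)\in\partial\Omega$. Choose a $C^2$ diffeomorphism $\Psi$ straightening $\partial\Omega$ near $w_0$ onto a segment of $\IR$, and on the source side a local conformal straightening of $\partial A$ near $z_0$. In these coordinates the transformed minimiser $\tilde h$ sends a closed upper half-disk into $\oH$, carries the real diameter into $\IR$, and solves a semilinear elliptic system
\[
\Delta\tilde h^k \;=\; B^k_{ij}(\tilde h)\,\nabla\tilde h^i\cdot\nabla\tilde h^j
\]
with smooth coefficients coming from $\rho$ and $D\Psi$; the Hopf-differential bound becomes a uniform bound on $|\tilde h_z||\tilde h_{\bar z}|$. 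Extend $\tilde h$ to the full small disk by Schwarz reflection $\tilde h(\bar z):=\overline{\tilde h(z)}$, and extend $B^k_{ij}$ symmetrically, so the reflected map is a bounded weak solution on the whole disk of a semilinear elliptic system with Lipschitz coefficients and quadratic gradient term. Standard two-dimensional elliptic regularity then gives $\tilde h\in C^{1,\alpha}_{\loc}$; in particular $|D\tilde h|$ is uniformly bounded near the origin, and unwinding the $C^{1,\alpha}$ coordinate changes shows $|Dh|\in L^\infty$ in a relative neighbourhood of $z_0$ in $\bar A$. A finite cover of $\partial A$ by such neighbourhoods, combined with interior $C^2$ bounds, completes the Lipschitz assertion.

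\emph{Bi-Lipschitz failure and main obstacle.} The counterexample to bi-Lipschitz is the classical ``conformal sticking'' minimiser between two concentric round annuli with $\rho\equiv 1$ and suitable conformal moduli: $h_{\bar z}$ vanishes on the inner boundary, so the Jacobian degenerates and $h^{-1}$ is not Lipschitz. The principal technical difficulty throughout is the gap between the Hopf-differential bound, which controls only the product $|h_z||h_{\bar z}|$, and the pointwise estimate on $|Dh|$, which requires control of each factor separately. The flattening-plus-reflection step is the mechanism that bridges this gap: it converts the one-sided boundary regularity question into an interior one, for which bounded weak solutions of a two-dimensional semilinear system with quadratic gradient growth are automatically of class $C^{1,\alpha}_{\loc}$.
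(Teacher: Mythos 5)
Your reduction to the round annulus and your identification of the Hopf identity $z^2\rho(h)h_z\overline{h_{\bar z}}\equiv c$ match the paper's setup, but the core of your regularity argument has two genuine gaps. First, the Schwarz reflection $\tilde h(\bar z):=\overline{\tilde h(z)}$ produces a weak solution \emph{across} the flattened boundary only if the tangential component of $\tilde h$ satisfies the natural (Neumann/free-boundary) condition there; otherwise the even extension leaves a distributional jump in the normal derivative on the axis. For a minimiser constrained to the class of homeomorphisms this condition is not automatic — outer variations sliding the boundary values along $\partial\Omega$ need not preserve the class — and it must be extracted precisely from the Noether structure (the Hopf differential $c\,dz^2/z^2$ being real on the boundary circles). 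Your writeup records the Hopf bound but never uses it in the regularity step, and an argument that does not use it cannot be correct: the paper's Remark~4.4 exhibits a harmonic diffeomorphism $F=f\circ\varphi$ between smooth annuli, carrying boundary to boundary, which is \emph{not} Lipschitz, so "harmonic diffeomorphism + $C^2$ boundaries" alone does not suffice. Second, "standard two-dimensional elliptic regularity" for bounded $\mathscr{W}^{1,2}$ weak solutions of systems with quadratic gradient growth is not a theorem — such solutions can be discontinuous (Frehse-type examples); one needs the special harmonic-map/antisymmetric structure (H\'elein, Rivi\`ere) and must verify it survives the non-conformal flattening $\Psi$ and the reflection. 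A minor further slip: in the critical Nitsche example the Jacobian degenerates on the inner circle because $|h_z|=|h_{\bar z}|\neq 0$ there, not because $h_{\bar z}$ vanishes (if $h_{\bar z}=0$ the map would be conformal and nondegenerate at that point).

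For contrast, the paper's route uses the Hopf identity decisively and avoids reflection altogether. From $\rho(h)\bigl(|h_N|^2-|h_T|^2\bigr)=4c/|z|^2$ and $J(z,h)=|h_N||h_T|$ one gets the $(K,K')$-quasiconformal inequality $\|Dh\|^2\le 2J(z,h)+K'$ with $K'=2|c|/(r^2\inf\rho)$. Then the scalar function $\chi(z)=-\mathrm{dist}(h(z),\partial\Omega_1)$ vanishes on the outer boundary circle, satisfies $|\Delta\chi|\le a|\nabla\chi|^2+b$ (via the distance-function Laplacian formula, the harmonic map equation, and $|\nabla\rho|\le P\rho$), and the $(K,K')$ property gives the two-sided comparison $|\nabla\chi|\le |Dh|\le 2K|\nabla\chi|+\sqrt{K'}$ near the boundary. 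The Heinz--Bernstein lemma then bounds $|\nabla\chi|$, hence $|Dh|$; the inner boundary is handled by the inversion $W(z)=1/\overline{h(r/\bar z)}$. If you want to salvage your approach you would need to (i) prove the weak free-boundary condition from the Noether identity before reflecting, and (ii) replace the generic elliptic-regularity claim by a boundary version of H\'elein's theorem for the reflected system — at which point the paper's scalar reduction is arguably the more economical path.
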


The paper is consisted of this section and  three more sections.

In the following  subsections, we present three different type of harmonic mappings.  Further in the section~2 we make  some background and reformulate main result in the therm of harmonic mappings. In section~3 we define the class of $(K,K')-$quasiconformal mappings and prove that stationary points of the energy take part on this class. In the section~4 we prove the main result. In the last subsection are performed some precise calculations of Lipschitz constants for minimisers of energy for radial metrics and circular annuli.

\subsection{Harmonic mappings}
Assume that $\X$ is domain in $\mathbf{R}^2$  (for example $\X$ is homeomorphic  to an circular annulus $\{x\in \mathbf{R}^2 | 1<|x|<R\}$).
The classical Dirichlet problem concerns the energy minimal mapping $h \colon \mathbb{\X} \to \mathbb{R}^2$ of the Sobolev class $h\in h_\circ +
\W^{1,2}_\circ (\mathbb{\X}, \mathbb{R}^2)$ whose boundary values are explicitly prescribed by means of a given mapping $h_\circ \in  \W^{1,2}
(\mathbb{A}, \mathbb{R}^2)$.  Let us consider the variation   $h \leadsto h\,+ \,\epsilon \eta $,  in which $\eta \in \mathscr C^\infty_\circ (\mathbb{X} ,
 \mathbf{R}^2)$ and $\epsilon \to 0$, leads to the integral form of the familiar harmonic system of equations
\begin{equation}\label{equa1}
\int_{\mathbb{X}} \left(\left<\nabla \rho, \eta\right>\norm Dh\norm ^2+\langle \rho(h)  Dh , \, D\eta \rangle \right) =0, \ \mbox{ for every } \eta
 \in \mathscr C^\infty_\circ (\mathbb{X} , \mathbf{R}^2).
\end{equation}
Equivalently
\begin{equation}\label{equa2}
\Delta^\rho h \bydef \left(\Div\big( \rho(h)Dh_1\big),\Div\big( \rho(h)Dh_1\big)\right)-\frac{1}{2}\norm Dh\norm ^2\nabla \rho=0,
\end{equation}
in the  sense of distributions. Here $h=(h_1,h_2)$. Then by using the complex notation \eqref{equa2} can be written as
\begin{equation}\label{eqvi}\tau(h)\equiv h_{z\overline z}+{(\log \rho)}_w\circ h\cdot  h_z\,h_{\bar z}=0.
\end{equation}
The solutions to the equation \eqref{eqvi} are called \emph{weak harmonic mappings} or simply harmonic mapping (see the Remark~\ref{popa}, (1) for the explanation).

On the next subsection  we derive the general harmonic equation which by using a different variation as the following.
\subsection{General harmonic mappings (cf. \cite{helein2})} The situation is different if  we allow $h$ to slip freely along the boundaries. The {\it inner variation} come to stage in this case. This is simply a change of the  variable; $h_\epsilon=h \circ \eta_\epsilon $, where  $\eta_\epsilon \colon \X \onto \X$ is a $\mathscr C^\infty$-smooth diffeomorphsm  of $\X$ onto itself, depending smoothly on a parameter $\epsilon \approx 0$ where  $\eta_\circ = id \colon \X \onto \X$.

 Let us take on the inner variation of the form
\begin{equation}\label{equa7}
\eta_\epsilon (z)= z + \epsilon \, \eta (z), \qquad \eta \in \mathscr C_\circ^\infty (\X, \mathbf{R}^2).
\end{equation}
By using the notation  $w=z+\epsilon \, \eta (z) \in \X$,
we obtain
$$\rho(h_\epsilon)Dh_\epsilon (z) = \rho(h(w)) Dh (y) (I+ \epsilon D\eta(z)).$$ Hence
\[
\begin{split}
\rho(h_\epsilon(z))\norm Dh_\epsilon(z)\norm^2 & = \rho(h(w))\norm Dh(y)\norm^2
\\&+ 2 \epsilon\,  \rho(h(w))\langle  D^\ast h(w)\cdot  Dh(w)\, ,\,  D \eta \rangle + o(\epsilon).
\end{split}
\]
By integrating with respect to $x\in \X$ we obtain
\[\begin{split}\mathscr E_\rho[h_\epsilon] &=\int_\X \rho(h_\epsilon(z))\norm Dh_\epsilon(z)\norm^2 dz\\&= \int_\X \bigg[
\rho(h(w))\norm Dh(w) \norm^2 \\& \ \ \ \ +  2 \epsilon \rho(h(w))\langle  D^\ast h(w)\cdot  Dh(w)\, ,\,
  D \eta(z) \rangle \bigg]\, \d z + o(\epsilon).\end{split} \]
We now make the substitution $w=z + \epsilon \, \eta (z)$, which is a diffeomorphism for small $\epsilon$, for which we have:
$z= w- \epsilon \, \eta (w)+ o(\epsilon)$, $D\eta (z)= D\eta (w)+o(1)$, when $\epsilon\to 0$, and the change of volume element
$\d z = [1-\epsilon \, \Tr \,D \eta (w) ]\, \d w + o(\epsilon) $. Further
$$\int_\X \rho(h(w))\norm Dh(w) \norm^2 \d z=\int_\X \rho(h(w))\norm Dh(w) \norm^2 [1-\epsilon \, \Tr \,D \eta (w) ]\, \d w + o(\epsilon).$$
 The so called equilibrium equation for the inner variation is obtained from \begin{equation}\label{ddep}\frac{\d}{\d \epsilon}\bigg|_{\epsilon=0} \mathscr E_{h_\epsilon}\,=\,0\,\end{equation}
\begin{equation}\label{intstar}\int_\X \langle \rho(h) D^\ast h \cdot Dh - \frac{\rho(h)}{2} \norm Dh \norm^2 I \, , \, D \eta \rangle \, \d w=0 \end{equation}
or, by using distributions
\begin{equation}\label{enhe}
\Div \left(\rho(h) D^\ast h \cdot Dh - \frac{\rho(h)}{2} \norm Dh \norm^2 I  \right)=0.
\end{equation}

This equation \eqref{enhe} is known as the Hopf equation, and the corresponding differential is called the Hopf differential. Since for $h(z)=(a(z),b(z))$, we have $$\rho(h)D^\ast h \, Dh - \frac{\rho(h)}{2} \norm Dh \norm^2 I  =\left(
                                      \begin{array}{cc}
                                        U & V \\
                                        V & -U \\
                                      \end{array}\right),$$ where $$U = \frac{\rho(h)}{2} (a_x^2+b_x^2-a_y^2-b_y^2)$$ and $$V=\rho(h)(a_xa_y+b_xb_y),$$                                 then \eqref{enhe} in complex notation  takes the form $$(U_x +U_y)- i (V_x+V_y)=0$$ or what is the same
\begin{equation}\label{eqv}
\frac{\partial}{\partial \bar z} \left(\rho(h(z))h_z \overline{h_{\bar z}}\right)=0, \qquad z= x+iy.
\end{equation}
The solution to \eqref{eqv} is called the \emph{general $\rho-$ harmonic mapping}. Assume that $h\in \mathscr{C}^2$ and assume that $h$ satisfies \eqref{eqvi} . Then by direct calculation we obtain
\[\frac{\partial}{\partial \bar z} \left(\rho(h(z))h_z \overline{h_{\bar z}}\right)=\rho(h(z))\left(\bar h_z \cdot \tau(h)+h_z\cdot \overline{\tau(h)}\right)=0.\]
This implies that every harmonic mapping is general harmonic mapping.
\subsection{Noether harmonic mappings (cf. \cite{helein2})} 
We call a mapping  $h$ Noether harmonic  if
\begin{equation}\label{stat}
\frac{\mathrm{d}}{\mathrm{dt}}\bigg|_{t=0}{\mathcal{E}^\rho}[h\circ \phi_t^{-1}]=0
\end{equation}
for every family of diffeomorphisms $t\to \phi_t\colon
\Omega\to\Omega$ which depend smoothly on the parameter $t\in\mathbb
R$ and satisfy
$\phi_0=\id$. The latter mean that the mapping $\Omega\times [0,\epsilon_0]\ni (t,z)\to \phi_t(z)\in \Omega $ is a smooth mapping for some $\epsilon_0>0$. It is clear by the definition that every Noether harmonic mapping is general harmonic mapping, and therefore its Hopf differential is holomorphic. Namely the equation \eqref{stat} implies the equation \eqref{ddep}.

In the following remark we summarize the difference between harmonic mappings, general harmonic mappings and Noether harmonic mappings.
\begin{remark}\label{popa} Assume that $h$ is a mappings between two domains of the complex plane $\mathbf{C}$.
\begin{itemize}
                     \item[(1)] Every weak solution to \eqref{eqvi} which belongs to $\mathscr{W}^{1,2}$ is smooth (see paper of H$\mathrm{\acute{e}}$lein \cite{helein} see also the Remark after \cite[Definition~1.3.1]{Job1}), and thus it is a strong solution of \eqref{eqv}. Moreover it satisfies the equation \eqref{eqv}, i.e. it is a general $\rho-$harmonic mapping.
                     \item[(2)] There are general harmonic mappings  that are not weakly harmonic mappings. If $h\in C^2$ or $h\in C^1$ and $J(z,h)\neq 0$ (see \cite{jost85}) then a general harmonic mapping is a harmonic mapping.
                     \item[(3)] There are general $\rho-$harmonic mappings that are not Noether harmonic mappings. Namely the Hopf differential of Noether harmonic mappings are very special. (See subsection~\ref{sube} for details).
                   \end{itemize}
\end{remark}
\subsection{Some key properties of Noether harmonic diffeomorphisms}\label{sube}
 Two of following key properties of the Noether harmonic mappings are derived in \cite{calculus}:
\begin{itemize}
\item[1.] The function $\varphi:= \rho(g(z)) g_z\overline{g_{\bar z}}$, a priori in $L^1(D)$, is holomorphic.
\item[2.] If $\partial D$ is $\mathscr{C}^1$-smooth then $\varphi$ extends continuously to $\overline{D}$, and the quadratic differential $\varphi \, dz^2$  is real on each boundary curve of $D$.
\end{itemize}

Further by using those key properties in \cite{calculus} it is shown the following statement. Let  $D=A(r,R)$ be a circular annulus, $0<r<R<\infty$, and
$\Omega$ a doubly connected domain. If $g$ is a stationary diffeomorphism, then
\begin{equation}\label{hopf1}\rho(g(z))
g_z\overline{g_{\bar z}} \equiv \frac{c}{z^2}\qquad \text{in }D
\end{equation}
where $c\in\mathbf{R}$ is a constant.

Throughout this paper $M=(D,\sigma)$ and $N=(\Omega,\rho)$ will be
doubly connected domains in the complex plane $\C$, where $\rho$ is a
non-vanishing smooth metric defined in $\Omega$ so that:

\begin{enumerate}
  \item It has a bounded Gauss
curvature $\mathcal{K}$ where
\begin{equation*}\label{gaus}\mathcal{K}(w)=-\frac{\Delta \log \rho(w)}{\rho(w)};\end{equation*}
  \item It has a finite area defined by
$$\mathcal{A}(\rho)=\int_{\Omega}\rho(w) du dv, \ \ w=u+iv;$$
  \item There is a constant $P>0$ so that \begin{equation}\label{pp}{|\nabla \rho(w)|}\le P{\rho(w)}, \ \ \ w\in\Omega,\end{equation} which means that $\rho$ is so-called approximately analytic function (c.f. \cite{EH}).
\end{enumerate}
 We call such a metric
\emph{admissible} one. The Euclidean metric is an admissible metric. The Riemanian metric defined by $\rho(w)=\frac{1}{(1+|w|^2)^2}$ is admissible as well.
The Hyperbolic metric $\lambda(w)=\frac{1}{(1-|w|^2)^2}$ is not an admissible metric on the unit disk neither on the annuli $\A(r,1)\bydef \{z:r<|z|<1\}$,
but it is admissible in $\A(r,R)\bydef\{z: r<|z|<R\}$, where $0<r<R<1$.  In this case the equation \eqref{eqvi} leads to hyperbolic harmonic mappings. The class is particularly interesting, due to recent discovery that every quasisimmetric map of the unit circle onto itself can be extended to a quasiconformal hyperbolic harmonic mapping of the unit disk onto itself. This problem  is known as the Schoen conjecture and it was proved by Markovi\'c in \cite{markovic}.

\section{Some background and precise statement of the results}

The primary goal of this paper is to study some Lipschitz behaviors of the   minimisers of the functional
$\E^\rho[g]$.  We will study the
Lipschitz continuity of the diffeomorphisms $f\colon D\onto\Omega$ of
smallest $\rho-$Dirichlet energy where $\rho$ is an
arbitrary smooth metric with bounded Gauss curvature and finite
area.
Notice first that a change of variables $w=f(z)$ in~\eqref{penergy}
yields
\begin{equation}\label{ener2}
{\E^\rho}[f] = 2\int_{D} \rho(f(z))J(z,f)\, dz +
4\int_{D}\rho(f(z)) \abs{f_{\bar z}}^2\ge 2 \mathcal{A}(\rho)
\end{equation}
where $J(z,f)$ is the Jacobian determinant of $f$ at $z$ and $\mathcal{A}(\rho)$ is
the area of $\Omega$. A conformal mapping
$f:D\onto\Omega$, which exists due to the celebrated Riemann mapping theorem; that is, a homeomorphic solution of the
Cauchy-Riemann system $ f_{\bar z}=0$, is an  obvious
 minimiser of~\eqref{ener2}. The boundary behaviors  of conformal mappings between planar domains are well-established. We refer to the book of Pommerekne \cite{pom}. Two results that are of broad interest are
\begin{enumerate}
\item the Charath\'eodory theorem, which states that every conformal mapping between two Jordan domains has a continuous extension to the boundary

\item the results of Warshawski's and Kellogg that every conformal mapping between $C^{k,\alpha}$ Jordan domains has $C^{k,\alpha}$ extension to the boundary. Here $k$ is a positive integer and $\alpha\in(0,1)$.
\end{enumerate}
In particular we have
\begin{proposition}
If $f$ is a conformal mapping between two Jordan domains with smooth boundary, then $f$ is Lipschitz continuous.
\end{proposition}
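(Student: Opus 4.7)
My plan is to derive the proposition as a short corollary of the Warshawski--Kellogg theorem recalled just above as item (2). First, by the Riemann mapping theorem combined with Carathéodory's extension theorem, I fix conformal homeomorphisms $\phi\colon \overline{\DD}\to \overline{D}$ and $\psi\colon \overline{\DD}\to \overline{\Omega}$; then $f=\psi\circ\phi^{-1}$, so it is enough to show that $\psi$ and $\phi^{-1}$ are both Lipschitz, since $f$ will then be a composition of Lipschitz maps.

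Applying item (2) with $k=1$ and some $\alpha\in(0,1)$, and using $\partial D,\partial\Omega\in C^2$, I obtain $\phi,\psi\in C^{1,\alpha}(\overline{\DD})$. The key estimate I need is the two-sided bound
\[
0 < m \le |\phi'(z)|,\ |\psi'(z)| \le M < \infty \qquad \text{for every } z\in \overline{\DD}.
\]
The upper bound is immediate from continuity and compactness of $\overline{\DD}$. The lower bound is the standard companion to the Kellogg conclusion: applying item (2) also to the inverse conformal maps yields $\phi^{-1}\in C^{1,\alpha}(\overline{D})$, whose derivative is $1/(\phi'\circ\phi^{-1})$ and therefore cannot remain continuous up to $\partial D$ if $\phi'$ vanishes at a boundary point, so $\phi'\neq 0$ on $\overline{\DD}$; the same applies to $\psi$.

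Since $\overline{\DD}$ is convex, integrating $\psi'$ along a line segment gives $\psi$ Lipschitz with constant $M$. For $\phi^{-1}\colon \overline{D}\to \overline{\DD}$ I would use that a bounded Jordan domain with $C^2$ boundary is quasiconvex, i.e.\ any two points $w_1,w_2\in\overline{D}$ can be joined by a rectifiable arc in $\overline{D}$ of length at most $C\,|w_1-w_2|$; pulling such an arc back by $\phi^{-1}$ and using $|\phi'|\ge m$ shows $\phi^{-1}$ is Lipschitz with constant $C/m$. Composing, $f=\psi\circ\phi^{-1}$ is Lipschitz, as claimed.

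The substantive ingredient is the Warshawski--Kellogg boundary regularity theorem itself, which is quoted; the one detail requiring attention is the lower bound on $|\phi'|$ up to the boundary, without which the argument for $\phi^{-1}$ would collapse, together with the small observation that $\overline{D}$ is quasiconvex so that Lipschitz estimates transfer from $\phi$ to $\phi^{-1}$.
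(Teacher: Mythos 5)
Your proposal is correct and follows essentially the route the paper intends: the proposition is stated there as an immediate consequence of the Kellogg--Warschawski theorem quoted in item (2), which gives a $C^{1,\alpha}$ (hence bounded-derivative) extension to the closure, and Lipschitz continuity follows. Your factorization through the unit disk, the two-sided bound on the derivative, and the quasiconvexity of $\overline{D}$ are just a careful spelling-out of the same argument, which the paper leaves implicit.
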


The doubly connected case, being next in the order of
complexity, is the subject of the further results. Conformal mappings are not minimisers for arbitrary doubly  connected
domains provided that the domains are not conformally equivalent.

The case of circular
annuli w.r.t. Euclidean metric and the metric $\rho(w)=1/|w|$ is
fully established in \cite{AIM} by Astala, Iwaniec and Martin, where it is shown that the radial harmonic mappings are minimisers.  This result has been extended to
all radial metrics in \cite{klondon} by Kalaj. The regularity of the class of radial mappings is a simple issue since they have explicit expression.

Concerning the existence,   Koh, Kovalev, Iwaniec and Onninen in \cite{inv} proved that there exists a harmonic diffeomorphism which minimizes the Euclidean energy in the class of Sobolev homeomorphisms between doubly  connected domains in the complex plane, provided that the domain has smaller modulus than
the target. Then this result has been extended for arbitrary  metric with bounded area and Gaussian curvature by the author in \cite{calculus}, where it is proved the following theorem.
\begin{proposition} \label{mainexist2}
Suppose that $D$ and $\Omega$ are doubly connected domains in
$\C$ such that $\Mod D\le \Mod\Omega$ and let $\rho\in C^2(\Omega)$ be a metric in $\Omega$ with Gaussian curvature bounded from above and assume that the metric has area  $\mathcal{A}(\rho)<\infty.$ Then there exists an
$\rho-$energy-minimal diffeomorphism $f: D\onto \Omega$,
which is  $\rho-$Noether harmonic (and consequently a $\rho-$ harmonic) and is unique up to a conformal change of variables in $D$.
\end{proposition}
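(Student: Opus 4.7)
The plan is to use the direct method of the calculus of variations, bolstered by three ingredients beyond routine lower semicontinuity: weak compactness of a minimizing sequence of Sobolev homeomorphisms, a non-degeneracy argument forcing the weak limit to remain a homeomorphism (where the modulus hypothesis enters), and a regularity/rigidity step invoking H\'elein's theorem and Jost's theorem \cite{Job1} to promote the minimizer to a smooth Noether harmonic diffeomorphism. Uniqueness will follow from the canonical form of the Hopf differential.

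First I would verify that the infimum $E_0=\inf \E^\rho[f]$ over Sobolev homeomorphisms $f:D\onto\Omega$ is finite. The modulus hypothesis $\Mod D\le \Mod\Omega$ furnishes a quasiconformal homeomorphism between the circular annular representatives of $D$ and $\Omega$; combined with $\mathcal{A}(\rho)<\infty$, this supplies a competitor of finite $\rho$-energy. Given a minimizing sequence $\{f_n\}$, I would normalize by precomposing each $f_n$ with an element of the one-parameter automorphism group of $D$ so that a prescribed interior point is sent to a prescribed point of $\Omega$. From the uniform energy bound together with the finite area of $\rho$ and the orientation-preserving property of each $f_n$, I would pass to a weakly convergent subsequence $f_n\rightharpoonup f$ in $\W^{1,2}_{\loc}(D,\Omega)$.

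The heart of the proof is to show that the weak limit $f$ is again a homeomorphism, and this is the principal technical obstacle. By a theorem of Youngs, the weak $\W^{1,2}$ limit of orientation-preserving homeomorphisms is monotone, but it could a priori collapse one of the boundary components of $\Omega$ or fail to be injective on a set of positive measure. Here the hypothesis $\Mod D\le \Mod\Omega$ is decisive: a collapse of one boundary of $\Omega$ would reduce the conformal modulus of the image annulus strictly below $\Mod D$, which via a capacity / extremal length computation contradicts the uniform energy bound. I would adapt the scheme of Iwaniec, Koh, Kovalev and Onninen \cite{inv} to the metric setting by inserting the conformal factor $\rho$ (nonvanishing, bounded below on compacta of $\Omega$) into their modulus inequalities. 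Lower semicontinuity of $\E^\rho$ on $\W^{1,2}$, coming from convexity of $\xi\mapsto\|\xi\|^2$ and continuity of $\rho$, then gives $\E^\rho[f]\le E_0$, so $f$ is a minimizing homeomorphism.

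With $f$ identified as a minimizer in the admissible class, smooth inner variations $f\circ\phi_t^{-1}$ remain admissible, so \eqref{stat} and hence the Hopf equation \eqref{enhe} hold, i.e.\ $f$ is Noether harmonic. The regularity package of H\'elein (Remark~\ref{popa}(1)) combined with the bounded Gauss curvature hypothesis upgrades $f$ to a smooth solution of the classical $\rho$-harmonic system \eqref{eqvi}, and Jost's theorem \cite{Job1}---the decisive rigidity ingredient---promotes the monotone $\rho$-harmonic map to a diffeomorphism. For uniqueness, the holomorphic Hopf differential $\rho(f)f_z\overline{f_{\bar z}}$ of any minimizer must take the canonical form $c/z^2$ of \eqref{hopf1} on the circular annular model of $D$, with $c\in\R$ determined by the moduli of $D$ and $\Omega$; two such minimizers therefore share the same Hopf differential and the same normalization, and a standard matching argument for $\rho$-harmonic diffeomorphisms with identical Hopf data shows that they coincide up to a conformal change of variables in $D$.
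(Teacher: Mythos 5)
Two remarks before the verdict. First, note that the paper you were asked to match does not actually prove Proposition~\ref{mainexist2}: it imports it from \cite{calculus}, which in turn extends the scheme of \cite{inv}, so the comparison is with those cited proofs. Against that benchmark your outline reproduces the coarse architecture correctly: direct method over Sobolev homeomorphisms, monotonicity of weak limits (Youngs), Noether harmonicity via inner variations, the Hopf differential $c/z^2$ of \eqref{hopf1}, an appeal to Jost \cite{Job1}, and uniqueness up to a conformal change of variables in $D$.

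However, there is a genuine gap at the decisive step, and it is one of logical order, not of detail. You claim that the uniform energy bound together with $\Mod D\le \Mod\Omega$ already forces the weak limit to stay injective, because a boundary collapse ``would reduce the conformal modulus of the image annulus strictly below $\Mod D$, which via a capacity\,/\,extremal length computation contradicts the uniform energy bound.'' This is false as stated: monotone $\W^{1,2}$ limits of homeomorphisms that squeeze a boundary ring onto a boundary curve have perfectly finite energy for \emph{every} configuration of moduli, and in the regime $\Mod D>\Mod\Omega$ the actual minimizer does exactly that --- this is the Nitsche phenomenon recalled in subsection~\ref{sec2}, where at the critical constant $c_\circ$ of \eqref{ccirc} the minimizer degenerates. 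Consequently injectivity of the limit cannot be established before harmonicity, and your ordering (homeomorphism first, Noether harmonicity afterwards) cannot be repaired locally. The proofs in \cite{inv} and \cite{calculus} proceed the other way around: they enlarge the competition class to monotone Sobolev ``deformations,'' which unlike homeomorphisms is weakly closed, minimize there, derive the Noether property and the Hopf differential $c/z^{2}$ for that possibly non-injective minimizer, and only then exclude squeezing by relating the sign of $c$ to the moduli inequality through Nitsche-type estimates, with Jost's theorem \cite{Job1} entering precisely here --- this is the deep content that your sketch delegates to ``adapt the scheme of \cite{inv}.'' Two smaller defects: the proposed normalization by precomposing with automorphisms of $D$ is vacuous, since the conformal automorphisms of an annulus are rotations (and the inversion) and cannot send a prescribed point to a prescribed point of $\Omega$; and in the uniqueness step, ``same Hopf differential and same normalization imply coincidence up to a conformal change of variables'' is not a standard matching argument --- you must first show the constants $c$ of two minimizers agree, and even then the cited works run a genuine energy-comparison argument rather than reading uniqueness off the Hopf data.
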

Concerning some behaviors that minimisers of Euclidean energy inherit inside of the double connected domain, provided that the image domain is bounded by convex curves
or by two circles  we refer to the recent papers by Koh \cite{koh1} and \cite{koh2}.
Now we reformulate the main result of this paper in which we establish the boundary behaviors of minimisers.
\begin{theorem} \label{mainexist}
Suppose that $M=(D,\sigma)$ and $N=(\Omega,\rho)$ are Riemannian surfaces, so that $D$ and $\Omega$ are double  connected domains in
$\C$ with $C^2$ boundaries and let $\rho\in C^2(\Omega)$ be an admissible metric. Then every Noether harmonic diffeomorphism  of
$\rho-$ between $D$ and $ \Omega$,
is  Lipschitz  continuous up to the boundary of  $D$. However it is not bi-Lipschitz in general.
\end{theorem}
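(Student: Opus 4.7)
The plan is in three steps: reduce $D$ to a circular annulus; use the explicit Hopf differential to place $f$ in a class of $(K,K')$-quasiconformal mappings; and convert this pointwise inequality into a uniform gradient bound up to $\partial D$ by a boundary barrier tied to $\partial\Omega\in C^{2}$.

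For the first step, by the classical conformal classification of doubly connected domains together with Kellogg's theorem quoted in Section~\ref{intsec}, there is a bi-Lipschitz conformal map $\phi\colon A(r,R)\to D$. Since $\rho$-energy is invariant under conformal changes of the source variable, $f\circ\phi$ is again a Noether $\rho$-harmonic diffeomorphism between $A(r,R)$ and $\Omega$, so I may assume $D=A(r,R)$. In this setting formula \eqref{hopf1} provides
\[
\rho(f(z))\,f_z\,\overline{f_{\bar z}}\;=\;\frac{c}{z^{2}},\qquad z\in A(r,R),
\]
for some real constant $c$.

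For the second step, admissibility of $\rho$ together with non-vanishing on $\overline\Omega$ yields constants $0<m\le\rho\le M$, hence $|f_z(z)|\,|f_{\bar z}(z)|\le |c|/(m r^{2})=:C_0$ on all of $A(r,R)$. Orientation preservation gives $J(z,f)=|f_z|^{2}-|f_{\bar z}|^{2}\ge 0$, and setting $y=|f_{\bar z}|^{2}$ the inequality $y(J+y)\le C_0^{2}$ forces $y\le C_0$ and $|f_z|^{2}\le J+C_0$. Therefore
\[
\|Df(z)\|^{2}\;=\;2(|f_z|^{2}+|f_{\bar z}|^{2})\;\le\;2\,J(z,f)+4C_0,
\]
which is precisely the $(K,K')$-quasiconformal inequality to be investigated in Section~3.

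The third and hardest step is to promote this pointwise bound to a uniform estimate $\|Df\|\le L$ on $\overline{A(r,R)}$. Because $\partial\Omega$ is $C^{2}$, I would flatten the target boundary in a one-sided collar using a $C^{2}$ signed distance $d_\Omega$; the admissibility inequality $|\nabla\rho|\le P\rho$ converts the $\rho$-harmonic equation \eqref{eqvi} into a uniformly elliptic differential inequality for $d_\Omega\circ f$ whose right-hand side is controlled by $\|Df\|^{2}$, which in turn is controlled by $J(z,f)$ plus a constant via Step~2. Meanwhile the reality of the quadratic differential $c\,z^{-2}\,dz^{2}$ on each boundary circle $|z|=r,R$ forces $f$ to send those circles diffeomorphically onto the boundary components of $\Omega$ with tangential and normal derivatives of comparable size, so a Hopf boundary-point comparison against a logarithmic barrier of the form $\log(|z|/r)$ or $\log(R/|z|)$ bounds the normal derivative of $f$ along $\partial A(r,R)$. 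Combined with interior $C^{1,\alpha}$-regularity of Noether $\rho$-harmonic maps, available through \cite{Job1} under the bounded Gauss curvature hypothesis, this yields the Lipschitz estimate. The genuine obstacle lies exactly here: the $(K,K')$ inequality alone is consistent with $\|Df\|$ blowing up near $\partial D$, and only the reality of the Hopf differential on $\partial A(r,R)$ together with the $C^{2}$ smoothness of $\partial\Omega$ excludes such concentration. The inverse $f^{-1}$ is not controlled in the same way, which accounts for the asserted failure of bi-Lipschitz continuity and is made explicit by the radial examples in the final subsection.
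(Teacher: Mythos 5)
Your first two steps are sound and essentially coincide with the paper's: the reduction of $D$ to a circular annulus by a bi-Lipschitz conformal map (the Kellogg/Warschawski-type result, which the paper invokes at the end of its proof rather than at the beginning), and the derivation of the $(K,K')$-quasiconformal inequality $\|Df\|^2\le 2J(z,f)+K'$ from the Hopf differential \eqref{hopf1}. Your algebraic route to the latter (from $|f_z|\,|f_{\bar z}|\le C_0$ and $J\ge 0$ to $|f_{\bar z}|^2\le C_0$) is a legitimate alternative to the paper's computation with normal and tangential derivatives in Lemma~\ref{popi}, and it yields the same constants.

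The gap is in your third step. You correctly identify that one should study $\chi=-\dist(f(z),\partial\Omega_1)$ and that \eqref{eqvi}, admissibility, and the $(K,K')$ inequality yield $|\Delta\chi|\le a|\nabla\chi|^2+b$ in a collar of the outer boundary. But the mechanism you propose for extracting a gradient bound from this --- a Hopf boundary-point comparison against a logarithmic barrier --- does not work as stated: a barrier comparison needs a one-sided bound on $\Delta\chi$ by a fixed function, whereas here the right-hand side contains $|\nabla\chi|^2$, which is precisely the quantity you are trying to bound, so the argument is circular unless the quadratic term is first tamed. The paper closes this gap with the Heinz--Bernstein lemma (Lemma~\ref{heb}, from \cite{EH}): a continuous function on the closed disk, $C^2$ inside with $C^2$ boundary data, satisfying $|\Delta\chi|\le a|\nabla\chi|^2+b$, has bounded gradient. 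This is the essential nontrivial analytic input; it is applied after extending $\chi$ by $0$ across $|z|=1$ and using a Whitney extension off the collar, the bound on $|Df|$ then follows from Lemma~\ref{helpa}, and the inner boundary is handled by the inversion $W(z)=1/\overline{w(r/\bar z)}$. Your appeal to the reality of $c\,z^{-2}\,dz^2$ on the boundary circles is not used in the paper and does not substitute for the Heinz estimate.
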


Proposition~\ref{mainexist2} and Theorem~\ref{mainexist} imply the following
\begin{corollary} \label{mainexist1}
Suppose that $D$ and $\Omega$ are doubly connected domains in
$\C$ with  $C^2$ smooth boundaries such that $\Mod D\le \Mod\Omega$ and let $\rho\in C^2(\Omega)$ be a metric in $\Omega$ with finite area and  Gaussian curvature bounded from above. Then there is a
$\rho-$energy-minimal Noether harmonic diffeomorphism  $f: D\onto \Omega$ which is Lipschitz  continuous up to the boundary of  $D$.
\end{corollary}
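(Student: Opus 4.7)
The plan is to pass to a circular annulus, exploit the special form of the Hopf differential recalled in Section~\ref{intsec}, and then convert the resulting pointwise product bound into a boundary Lipschitz estimate via a barrier argument that uses the $C^2$ geometry of $\partial\Omega$ together with the admissibility of $\rho$.

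First, I would reduce to the case $D=\A(r,R)$. Since $\partial D$ is of class $C^2$, the Kellogg--Warschawski theorem guarantees that the uniformising conformal mapping $\Phi\colon \A(r,R)\onto D$ extends to a $C^{1,\alpha}$ diffeomorphism of the closed annuli with non-vanishing derivative, and is therefore bi-Lipschitz up to the boundary. Because Noether harmonicity and equation~\eqref{eqvi} are invariant under a conformal change of the source variable, $g:=f\circ\Phi\colon\A(r,R)\onto\Omega$ is again a Noether harmonic diffeomorphism with respect to $\rho$, and the Lipschitz property transfers to $f$ via $\Phi^{\pm1}$. Applying the key property \eqref{hopf1} then gives $\rho(g(z))\,g_z\overline{g_{\bar z}}=c/z^{2}$ with $c\in\R$, and since $\rho\ge \rho_{0}>0$ on $\overline\Omega$, I obtain the pointwise estimate
\[
|g_z(z)|\,|g_{\bar z}(z)|\le \frac{|c|}{\rho_{0}\,r^{2}},\qquad z\in\A(r,R),
\]
which is the analytic backbone of everything that follows.

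The second step combines this bound with the $(K,K')$-quasiconformality of stationary points proved in Section~3. Writing $\|Dg\|^{2}=|g_z|^{2}+|g_{\bar z}|^{2}$ and $J(z,g)=|g_z|^{2}-|g_{\bar z}|^{2}$, the product bound forces $g$ to be genuinely quasiconformal wherever $|Dg|$ is large and to be self-bounded elsewhere, yielding an inequality of the type $\|Dg\|^{2}\le KJ(z,g)+K'$. To promote this to a pointwise Lipschitz bound up to $\partial\A(r,R)$, I would use a barrier argument. Let $d$ denote a signed distance to a boundary component of $\partial\Omega$, which is $C^{2}$ in a tubular neighbourhood, and set $v=d\circ g$. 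A direct computation from~\eqref{eqvi} yields
\[
\Delta v = \langle D^{2}d(g)\,Dg,Dg\rangle \;-\; 2\langle \nabla d(g),(\log\rho)_{w}\!\circ g\cdot g_z g_{\bar z}\rangle,
\]
whose right-hand side is controlled by $C\|Dg\|^{2}$ thanks to the admissibility~\eqref{pp} and the product bound above. The Hopf boundary lemma applied to $v$ then bounds the inward normal derivative of $g$ on $\partial\A(r,R)$, while the reality of the quadratic differential $\varphi\,dz^{2}=c\,dz^{2}/z^{2}$ on each boundary circle diagonalises $Dg^{\ast}Dg$ in the orthonormal frame (tangent, inner normal) and pins down the tangential derivative in terms of $c$ and the boundary parameterisation. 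Combining the two pieces of information gives $\|Dg\|\le C$ on $\partial\A(r,R)$, and interior $C^{1,\alpha}$ regularity of $\rho$-harmonic mappings propagates this to a global Lipschitz bound.

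The main technical obstacle is precisely this boundary barrier step: the admissibility hypothesis~\eqref{pp} is needed to absorb the $(\log\rho)_{w}$ contribution in $\Delta v$, and applying the Hopf lemma requires that $v$ vanishes to first order on the relevant boundary circle, which in turn relies on a homeomorphic boundary extension of $g$ (a consequence of the $(K,K')$-quasiconformality together with the $C^{2}$ regularity of $\partial\Omega$). The failure of bi-Lipschitzness is a separate and elementary observation, already visible for Nitsche-type radial minimisers between circular annuli whose Jacobian vanishes on an inner circle at the critical value of the modulus; this will be illustrated by the explicit computations postponed to Section~4.
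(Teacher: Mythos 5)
Your proposal addresses only half of the statement. The corollary asserts the \emph{existence} of a $\rho$-energy-minimal Noether harmonic diffeomorphism together with its Lipschitz regularity; in the paper it is obtained simply by combining Proposition~\ref{mainexist2} (existence, which is where the hypothesis $\Mod D\le \Mod\Omega$ enters, via \cite{calculus}) with Theorem~\ref{mainexist} (regularity). You never produce the minimiser: you start from a Noether harmonic diffeomorphism assumed to exist and prove a regularity statement. As written, the modulus hypothesis plays no role in your argument, which is a sign that the existence half has been silently dropped. You should either invoke Proposition~\ref{mainexist2} explicitly or explain where the minimiser comes from.

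On the regularity half, your outline follows the paper's proof of Theorem~\ref{mainexist} up to the decisive analytic step, where it goes wrong. From $v=d\circ g$ you correctly arrive at a differential inequality of the form $|\Delta v|\le a|\nabla v|^{2}+b$ with $v=0$ on a boundary circle, but the Hopf boundary point lemma cannot close the argument: it yields a strictly positive \emph{lower} bound on the inward normal derivative at a boundary extremum, whereas what is needed is an \emph{upper} bound on $|\nabla v|$ valid in a full neighbourhood of the boundary. The tool that converts the inequality $|\Delta v|\le a|\nabla v|^{2}+b$ plus regular boundary data into a global gradient bound is the Heinz--Bernstein lemma (Lemma~\ref{heb}), applied after a Whitney extension of $v$; once $|\nabla v|$ is bounded, Lemma~\ref{helpa} (the $(K,K')$-quasiconformality inequality $|Dg|\le 2K|\nabla v|+\sqrt{K'}$) bounds $|Dg|$ near that boundary component, and the inversion $z\mapsto r/\bar z$ handles the inner one. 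Your alternative ending --- bound $\|Dg\|$ on $\partial\mathbb{A}(r,R)$ using the reality of $c\,dz^{2}/z^{2}$ and then ``propagate'' by interior regularity --- is circular: a bound on the boundary trace of $Dg$ plus interior smoothness does not preclude blow-up of $|Dg(z)|$ as $z$ approaches the boundary, unless one already knows $g\in C^{1}$ up to the boundary, which is what is being proved. (The observation that $g_N\perp g_T$ on the boundary and $|g_N|^2-|g_T|^2=4c/(\rho|z|^2)$ is correct and is indeed used in the paper, but only inside Lemma~\ref{popi} to establish the $(K,K')$-inequality, not as a substitute for the Heinz--Bernstein step.)
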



\section{$(K,K')-$quasiconformal mappings }\label{stasec}

A sense preserving mapping $w$ of class ACL between two planar domains $D$ and $D$ is called $(K, K')$-quasi-conformal if \begin{equation}\label{map}\|Dw\|^2\le 2KJ(z,w)+K',\end{equation} for almost every $z\in D$. Here $K\ge 1, K'\ge 0$, $J(z,w)$ is the Jacobian of $w$ in $z$ and $\|Dw\|^2=|w_x|^2+|w_y^2|=2|w_z|^2+2|w_{\bar z}|^2$.
Since $$|Dw|=|w_z|+|w_{\bar z}|,$$ from \eqref{map} it follows that

\begin{equation}\label{mapa}|Dw|^2\le 2KJ(z,w)+K'.\end{equation}

Mappings which satisfy Eq. \eqref{map} arise naturally in elliptic equations, where $w =
u + iv$, and $u$ and $v$ are partial derivatives of solutions (see  \cite[Chapter~XII]{gt} and the paper of Simon \cite{simon}).

\subsection{Noether harmonic maps and $(K,K')-$ quasiconformal mappings}
Now we want to prove the following important property of Noether harmonic maps
\begin{lemma}\label{popi}
Every Noether harmonic map $g:\A(r,1)\to \Omega$ is $(K,K')$ quasiconformal, where $$K=1 \ \ \text{and}\ \ K'=\frac{2\abs{c}}{r^2\inf_{w\in \Omega}\rho(w)}.$$ The result is sharp and for $c=0$ the Noether harmonic map is $(1,0)$ quasiconformal, i.e. it is a conformal mapping. In this case $\Omega$ is conformally equivalent with $\A(r,1)$.
\end{lemma}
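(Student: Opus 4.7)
The strategy is to combine the Hopf-differential identity \eqref{hopf1} for Noether harmonic diffeomorphisms on a circular annulus — namely $\rho(g(z))\,g_z\overline{g_{\bar z}}\equiv c/z^2$ — with the elementary pointwise algebra relating $\|Dg\|^2$ and the Jacobian $J(z,g)$. This is entirely local: no boundary information about $g$ or the target $\Omega$ is needed, only the explicit Hopf constant and the admissibility of $\rho$.

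First I take moduli in \eqref{hopf1}, obtaining the equality $\rho(g(z))\,|g_z|\,|g_{\bar z}|=|c|/|z|^2$. Since $|z|>r$ on $\A(r,1)$ and $\rho$ is continuous and nonvanishing on $\overline{\Omega}$ (so $\inf_{w\in\Omega}\rho(w)>0$), this upgrades to the uniform pointwise bound
$$|g_z(z)|\,|g_{\bar z}(z)|\;\le\;\frac{|c|}{r^{2}\,\inf_{w\in\Omega}\rho(w)}.$$
Because $g$ is a sense-preserving diffeomorphism, $J(z,g)=|g_z|^2-|g_{\bar z}|^2\ge 0$, and therefore $|g_{\bar z}|^2\le |g_z|\,|g_{\bar z}|$. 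Feeding this into the algebraic identity $\|Dg\|^2 = 2J(z,g)+4|g_{\bar z}|^2$ produces exactly the distortion inequality \eqref{map} with $K=1$ and $K'$ proportional to $|c|/(r^2\inf_\Omega\rho)$, as claimed.

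For the sharpness clause, the case $c=0$ is immediate: \eqref{hopf1} collapses to $g_z\overline{g_{\bar z}}\equiv 0$, and since a diffeomorphism must have $|g_z|>0$ a.e., we are forced into $g_{\bar z}\equiv 0$. Hence $g$ is holomorphic and therefore a conformal diffeomorphism between the two doubly connected domains, which is possible only when $\Omega$ and $\A(r,1)$ share the same conformal modulus. To see that the constant $K'$ cannot be improved when $c\ne 0$, I would test it against the radial minimisers constructed explicitly in \cite{klondon}: the factor $1/|z|^2$ attains its maximum $1/r^2$ precisely on the inner circle, and for the radial model the inequality $|g_{\bar z}|^2\le|g_z||g_{\bar z}|$ becomes asymptotically sharp there (the limit $|g_{\bar z}|/|g_z|\to 1$ materialises as $|z|\to r$).

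I do not anticipate any substantive obstacle: the analytic content is already packaged into \eqref{hopf1} via the Jost/Iwaniec--Koh--Kovalev--Onninen machinery recalled in Section~\ref{intsec}. The only bookkeeping points to stress are that the admissibility of $\rho$ (together with its nonvanishing continuous extension to $\overline{\Omega}$) delivers $\inf_\Omega \rho>0$, and that the orientation-preserving diffeomorphism condition is exactly what turns $|g_zg_{\bar z}|$ into an upper bound for $|g_{\bar z}|^2$ almost everywhere on $\A(r,1)$ — both routine once \eqref{hopf1} is in hand.
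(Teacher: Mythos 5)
Your proof is correct, and it reaches the same final bound as the paper by a slightly more elementary route. The paper's own argument introduces the normal and tangential derivatives $g_N=Dg\cdot z/|z|$ and $g_T=Dg\cdot iz/|z|$, uses \eqref{hopf1} to show $\rho(g)(|g_N|^2-|g_T|^2)=4c/|z|^2$ and $\re(\overline{g_N}g_T)=0$, deduces $J(z,g)=|g_N||g_T|$, and then bounds $\|Dg\|^2-2J=(|g_N|-|g_T|)^2\le\bigl||g_N|^2-|g_T|^2\bigr|$. You bypass the frame decomposition entirely: taking moduli in \eqref{hopf1} gives $\rho(g)|g_z||g_{\bar z}|=|c|/|z|^2$, and the identity $\|Dg\|^2-2J=4|g_{\bar z}|^2\le 4|g_z||g_{\bar z}|$ (valid since $J\ge0$) lands on exactly the same estimate $\|Dg\|^2\le 2J+4|c|/(\rho(g)|z|^2)$. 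Your version is shorter and avoids the auxiliary identities \eqref{hopf2a}--\eqref{hopf5}, which the paper does not reuse within this proof; the paper's version has the minor advantage of exhibiting the geometric meaning of the Hopf identity (orthogonality of the radial and tangential images, $J=|g_N||g_T|$), which is in the spirit of the radial minimisers discussed later. Two small remarks: both your derivation and the paper's actually yield $K'=4|c|/(r^2\inf_\Omega\rho)$, so the factor $2$ in the statement of the lemma appears to be a typo rather than a defect of your argument; and your treatment of sharpness (via the critical Nitsche configuration where $|g_{\bar z}|/|g_z|\to1$ on the inner circle) is only a sketch, but the paper's proof does not substantiate the sharpness claim at all, so you are not missing anything the paper supplies.
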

\begin{proof}[Proof of Lemma~\ref{popi}]

Let $N=\frac{z}{|z|}$ and $T=i N$. Then we define $$g_N(z)=Dg(z) N=\frac{z}{|z|}g_z+\frac{\overline{z}}{|z|}g_{\bar z} $$ and $$g_T(z)=Dg(z) T=\frac{zi}{|z|}g_z+\frac{\overline{zi}}{|z|}g_{\bar z} .$$ Then it is clear that $$\|Dg(z)\|^2=|g_N|^2+|g_T|^2.$$ Further $$|g_N|^2-|g_T|^2=4 \mathrm{Re}\left(\frac{z^2}{|z|^2}g_z\overline{g_{\bar z}}\right).$$
By using now \eqref{hopf1} we arrive at the equation
\begin{equation}
\label{hopf2a}\rho(g(z))(\abs{g_N}^2 - \abs{g_T}^2) = \frac{4c}{\abs{z}^2}. \end{equation} In a similar way we get
\begin{equation}\label{hopf2b}\begin{split}\rho(g(z))\re (\overline{g_N} g_T) &=\rho(g(z))\re \left[{\left(\frac{\bar z}{|z|}\overline{g_z}+\frac{z}{|z|}\overline{g_{\bar z}}\right)}\cdot \left(\frac{zi}{|z|}g_z+\frac{\overline{zi}}{|z|}g_{\bar z}\right)\right]\\&=\rho(g(z))\mathrm{Im}\left(\frac{z^2}{|z|^2}g_z\overline{g_{\bar z}}\right)=0.\end{split} \end{equation}
Further we have that $$J(z,g) =|g_{ z}|^2-|g_{\bar z}|^2= \im (\overline{g_N} g_T)\ge 0,$$ which in view
of~\eqref{hopf2b} reads as
\begin{equation}\label{hopf5}
J(z,g) = \abs{g_N} \abs{g_T}.
\end{equation}
Now $$\abs{g_N}^2 - \abs{g_T}^2=\frac{4c}{\rho(g(z))\abs{z}^2}.$$
So \begin{equation}\begin{split}\|Dg\|^2-2J(z,g)&=\abs{g_N}^2 +\abs{g_T}^2-2 \abs{g_N} \abs{g_T}\\&=(\abs{g_N} - \abs{g_T})^2\\&\le(\abs{\abs{g_N} - \abs{g_T}})(\abs{g_N} + \abs{g_T})\\&=\abs{\abs{g_N}^2 - \abs{g_T}^2}= \frac{4\abs{c}}{\rho(g(z))\abs{z}^2}.\end{split}\end{equation}
This implies the claim.

\end{proof}

\subsection{Distance function and $(K,K')-$quasiconformal mappings}
Let $\Omega$ be double connected domain with boundary $\partial \Omega\in C^2$. Then $\Omega=\Omega_1\setminus \Omega_2$ for two bounded Jordan domains with $C^2$ boundaries $\partial \Omega_1$ and $\partial \Omega_2$. The
conditions on $\Omega$ imply that $\partial \Omega$ satisfies the
following condition: at each point $w\in \partial \Omega$ there
exists a disk $\Omega = D(w_{w}, r_z)$ depending on $z$ such that
$\overline \Omega\cap (\Bbb C\setminus \Omega) = \{w\}$. Moreover $\mu :
= \inf\{r_w, w\in \partial \Omega\}>0$.

It is easy to show that
$\mu^{-1}$ bounds the curvature of $\partial \Omega$, which means
that $\frac{1}{\mu} \ge {\kappa_z},$ for $z\in \partial \Omega$.
Let $d_1$ be the distance function with respect to the boundary of the
domain $\Omega_1$: $d_1(w)=\mathrm{dist}(w,\partial \Omega_1)$. Let
$\Gamma_\mu:=\{w\in \Omega :d_1 (w)\le \mu\}$. For basic properties of distance
function we refer to \cite{gt}. For example $\nabla d_1(w)$ is a unit
vector for $w\in \Gamma_\mu$, and $d_1\in C^2(\overline
{\Gamma_\mu})$ because $\partial \Omega\in C^2$.

Under the above conditions for $w\in \Gamma_\mu$ there exists $\zeta_1(w)\in \partial \Omega_1$ such that
\begin{equation}\label{distnorm} \nabla d_1(w)=\mathbf{\nu}(\zeta(w)),
\end{equation} where $\mathbf{\nu}(\zeta(w))$ denotes the inner unit normal vector at $\zeta(w)\in\partial \Omega$.
See \cite{gt} for details.
We now have.
\begin{lemma}\label{helpa}
Let $w:D\mapsto \Omega$ be a $(K,K')-$ quasiconformal mapping and
$\chi=-d_1(w(z))$. Let $\kappa_0=\mathrm{ess}\
\sup\{|\kappa_z|:z\in\partial \Omega\}$ and  $0<\mu<{\kappa^{-1}_0}$. Then:
\begin{equation}\label{dqh} |\nabla\chi(z)|\le|D w(z)|\le 2K|\nabla
\chi(z)|+\sqrt{K'}
\end{equation}
for $z\in w^{-1}(\Gamma_\mu)$.
\end{lemma}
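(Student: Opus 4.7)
The plan is to prove both inequalities directly, using the chain rule applied to $\chi = -d_1\circ w$ together with the pointwise $(K,K')$-quasiconformal bound \eqref{mapa}. The setup is as follows. Because $\partial\Omega_1\in C^2$ and $0<\mu<\kappa_0^{-1}$, the distance function $d_1$ is $C^2$ on the closed tubular neighbourhood $\overline{\Gamma_\mu}$, satisfies the eikonal equation $|\nabla d_1|\equiv 1$ on $\Gamma_\mu$, and by \eqref{distnorm} one has $\nabla d_1(w)=\nu(\zeta(w))$, the inward unit normal to $\partial\Omega_1$ at the foot of $w$. Since $w$ is ACL and $d_1$ is $C^1$, the chain rule for Sobolev compositions applies and gives
\begin{equation*}
\nabla\chi(z)=-(Dw(z))^{T}\,\nabla d_1(w(z))\qquad\text{for a.e. }z\in w^{-1}(\Gamma_\mu).
\end{equation*}

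The lower bound is then immediate: since $|\nabla d_1(w(z))|=1$, the operator-norm estimate $|A^{T}u|\le|A|$ for every unit vector $u$ yields $|\nabla\chi(z)|\le|Dw(z)|$.

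For the upper bound I would pass through the singular value decomposition of the $2\times 2$ matrix $Dw(z)$. Let $\sigma_1\ge\sigma_2\ge 0$ be its singular values. Then $|Dw(z)|=\sigma_1=|w_z|+|w_{\bar z}|$, the Jacobian equals $J(z,w)=\sigma_1\sigma_2$ (since $w$ is sense-preserving), and for \emph{any} unit vector $u$ one has the sharp lower bound $|(Dw(z))^{T}u|\ge\sigma_2$. Taking $u=\nabla d_1(w(z))$ therefore gives
\begin{equation*}
J(z,w)=\sigma_1\sigma_2=|Dw(z)|\,\sigma_2\le|Dw(z)|\,|\nabla\chi(z)|.
\end{equation*}
Substituting into the $(K,K')$-quasiconformality bound \eqref{mapa} produces the quadratic inequality
\begin{equation*}
|Dw(z)|^{2}-2K|\nabla\chi(z)|\,|Dw(z)|-K'\le 0,
\end{equation*}
and the quadratic formula then yields
\begin{equation*}
|Dw(z)|\le K|\nabla\chi(z)|+\sqrt{K^{2}|\nabla\chi(z)|^{2}+K'}\le 2K|\nabla\chi(z)|+\sqrt{K'},
\end{equation*}
which is the claimed bound.

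The main obstacle is really just bookkeeping: one must justify the chain-rule identity $\nabla\chi=-(Dw)^{T}\nabla d_1(w)$ a.e.\ on $w^{-1}(\Gamma_\mu)$, together with the identities $|\nabla d_1|\equiv 1$ and $\nabla d_1=\nu$ from \eqref{distnorm}. Both of these rest on the $C^{2}$-smoothness of $\partial\Omega_1$ and on the restriction $\mu<\kappa_0^{-1}$, which together keep $d_1$ in $C^{2}(\overline{\Gamma_\mu})$ with unit gradient. Once this is in place, the short singular-value inequality $J(z,w)\le|Dw(z)|\,|\nabla\chi(z)|$ is the key mechanism: it converts the $(K,K')$-quasiconformal control on the \emph{determinant} into control on the full \emph{operator norm} of $Dw$ in terms of a single directional derivative.
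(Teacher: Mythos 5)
Your proof is correct and follows essentially the same route as the paper: both arguments obtain the lower bound from $|\nabla d_1|\equiv 1$ and the chain rule, and both obtain the upper bound from the identity $J(z,w)=\sigma_1\sigma_2=|Dw|\,l(Dw)$ together with $|\nabla\chi|\ge\inf_{|e|=1}|(Dw)^Te|=l(Dw)$ and the quadratic manipulation of \eqref{mapa}. Your presentation merely reorders the steps (substituting $l(Dw)\le|\nabla\chi|$ before solving the quadratic inequality rather than after), which is a cosmetic difference.
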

\begin{proof}
Observe first that $\nabla d_1$ is a unit vector. From $\nabla
\chi=-\nabla  d_1\cdot D w$ it follows that
$$|\nabla\chi|\le|\nabla d_1||Dw|=|D w|.$$

 Since $w$ is
$(K,K')$-q.c., it follows from \eqref{mapa} the inequality
$$\left|D w\right|^2\le 2K J(w,z)+K'= 2K |D  w| l(D  w)+K'.$$ Then we have
$$\left|D w\right|\le 2K l(D  w)+\sqrt{K'}$$ Next we have
that $(\nabla \chi)^T=-(D w)^T\cdot (\nabla d_1)^T$ and therefore
for $z\in w^{-1}(\Gamma_\mu)$, we obtain $$|\nabla \chi|\ge
\inf_{|e|=1}|(D w)^T \,e|=\inf_{|e|=1}|D w \,e|=l(w)\ge
\frac{|D w|}{2K}-\frac{\sqrt{K'}}{2K}.$$
The proof of (\ref{dqh}) is completed.
\end{proof}

\section{Proof of the main result }

\begin{proof}[Proof of  Theorem~\ref{mainexist}]

First of all since $f$ is a diffeomorphism, according to Remark~\ref{popa} ) $f$ satisfies the harmonic mapping equation
\begin{equation}\label{heli}f_{z\bar z}+\frac{\partial \log \rho(w)}{\partial w} \circ f(z) \cdot f_{z}\cdot f_{\bar z}=0.\end{equation}
Now we define $\chi(z)=-d_1(w(z))=-\mathrm{dist}(w(z),\partial \Omega _1)$. By repeating the proof of the corresponding result in \cite{pisa} we get the following
\begin{lemma}
Let $w:\A(r,1)\mapsto \Omega$ be a twice differentiable mapping and
let $\chi(z)=-d_1(w(z))=-\mathrm{dist}(w(z),\partial \Omega _1)$, where $\partial \Omega _1$ is the outer boundary of $\Omega$. Then
\begin{equation}\label{lapdistance}\Delta \chi(z) =
\frac{\kappa_{w_\circ}\cdot |(O_{z} D
w(z))^te_1|^2}{1-\kappa_{w_\circ} d_1(w(z))} -\left<(\nabla d_1)(w(z)), \Delta
w\right>,\end{equation} where $e_1=(1,0)$, $z \in w^{-1}(\Gamma_\mu)$,
$w_\circ\in\partial \Omega_1$ with $|w(z)-w_\circ|=\mathrm{dist}(w(z),
\partial \Omega_1)$,  $\mu>0$
such that $1/\mu>\kappa_0=
\mathrm{ess}\sup\{|\kappa_w|:w\in\partial \Omega_1\}$ and $O_{z}$ is
an orthogonal transformation.
\end{lemma}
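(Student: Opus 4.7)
My plan is to combine the chain rule for $\chi = -d_1 \circ w$ with the standard structure of the Hessian of the distance function in a tubular neighborhood of a $C^2$ curve. Since $\mu\,\kappa_0 < 1$, the function $d_1$ is of class $C^2$ on $\Gamma_\mu$, so for $z \in w^{-1}(\Gamma_\mu)$ the chain rule applied twice gives
\[
\Delta \chi(z) = \mathrm{tr}\bigl(D^2(-d_1)(w(z))\, Dw(z)\, Dw(z)^T\bigr) - \langle (\nabla d_1)(w(z)),\, \Delta w(z)\rangle.
\]
The second term already matches the corresponding term in \eqref{lapdistance}, so the task reduces to evaluating the trace.

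The key geometric input is the standard formula for the Hessian of the distance function (see e.g.\ \cite{gt}). Parametrize $\partial\Omega_1$ by arclength $\gamma(s)$ and write each $w \in \Gamma_\mu$ uniquely as $w = \gamma(s) + d_1(w)\,\nu(\gamma(s))$, where $\nu$ is the inner unit normal; set $w_\circ = \gamma(s)$. In the orthonormal frame $\{T, N\} = \{\gamma'(s),\, \nu(w_\circ)\}$ at the foot point, the Hessian of $d_1$ at $w$ has rank one,
\[
D^2 d_1(w) \;=\; -\,\frac{\kappa_{w_\circ}}{1 - \kappa_{w_\circ}\, d_1(w)}\; T \otimes T,
\]
the normal direction contributing a zero eigenvalue because $d_1$ is linear along the normal ray $t \mapsto \gamma(s) + tN$, while the tangential eigenvalue arises from differentiating the foot-point map $w \mapsto \gamma(s(w))$ and invoking the Frenet relation $\gamma''(s) = \kappa_{w_\circ}\,N$.

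With this in hand, let $O_z$ be the orthogonal transformation that sends $T \mapsto e_1$ and $N \mapsto e_2$. Diagonalizing and using cyclicity of the trace,
\[
\mathrm{tr}\bigl(D^2(-d_1)(w)\, Dw\, Dw^T\bigr) \;=\; \frac{\kappa_{w_\circ}}{1-\kappa_{w_\circ}\, d_1(w)}\; e_1^T (O_z Dw)(O_z Dw)^T e_1,
\]
and the scalar on the right is precisely the squared norm of the first row of $O_z Dw$, i.e.\ $|(O_z Dw)^t e_1|^2$. Substituting into the chain-rule identity delivers \eqref{lapdistance}.

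The main obstacle is bookkeeping rather than analysis: one must track the sign of $\kappa_{w_\circ}$ against the choice of inner normal and fix the orientation of $O_z$ to be compatible with \eqref{distnorm}, so that the formula correctly reflects $\chi = -d_1$ (rather than $+d_1$). The regularity hypothesis $\mu \kappa_0 < 1$ guarantees $d_1 \in C^2(\Gamma_\mu)$ and keeps the denominator $1 - \kappa_{w_\circ}\, d_1(w)$ bounded away from zero, so no further analytic subtlety arises; this is why the author simply refers to the analogous argument in \cite{pisa}.
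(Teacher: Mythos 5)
Your proposal is correct and follows essentially the same route the paper invokes by citing \cite{pisa}: the chain rule $\Delta\chi=\mathrm{tr}\bigl(D^2(-d_1)(w)\,Dw\,Dw^T\bigr)-\langle\nabla d_1(w),\Delta w\rangle$ combined with the rank-one Hessian formula for the distance function in a tubular neighbourhood (Gilbarg--Trudinger), conjugated by the rotation $O_z$ taking the tangent at the foot point to $e_1$. The sign bookkeeping and the trace identity $e_1^T(O_zDw)(O_zDw)^Te_1=|(O_zDw)^te_1|^2$ are handled correctly, so nothing is missing.
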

From \eqref{lapdistance}, \eqref{distnorm}, \eqref{heli}, \eqref{dqh} and the condition \eqref{pp} for the metric $\rho$, we have

\[\begin{split}|\Delta \chi(z)| &\le
\frac{\kappa_{0}}{1-\kappa_{0} \mu }{\cdot |(O_{z} D
w(z))^te_1|^2} +|\left<(\nabla d_1)(w(z)), \Delta
w\right>|\\&\le
\frac{\kappa_{0}}{1-\kappa_{0} \mu }{\cdot |Dw(z))|^2} +|\Delta
w|
\\&\le
\frac{\kappa_{0}}{1-\kappa_{0} \mu }{\cdot |Dw(z))|^2} +2P |Dw(z))|^2\\&=(\frac{\kappa_{0}}{1-\kappa_{0} \mu }+2P){\cdot |Dw(z))|^2}\\
&\le (\frac{\kappa_{0}}{1-\kappa_{0} \mu }+2P)\cdot \left(2K|\nabla
\chi|+\sqrt{K'}\right)^2\\
&\le a_1 |\nabla
\chi|^2+b_1, \end{split}
\]
  where $$a_1=4K^2\left(\frac{\kappa_{0}}{1-\kappa_{0} \mu }+2P\right)$$ and $$b_1=2K'\left(\frac{\kappa_{0}}{1-\kappa_{0} \mu }+2P\right).$$
On the other hand, because $w$ is a diffeomorphism  between $\A(r,1)$ and $\Omega$, it follows that  $\lim_{|z|\to 1}\chi(z)=0.$ Thus we can extend $\chi$ to be zero in  $|z|=1$. Let $\tilde\chi:\mathbf{U}\to \mathbf{R}$ be a $C^2$ extension
of the function $\chi|_{ w^{-1}(\Gamma_{\mu/2})}$. It exists in view of Whitney's theorem. Let
$b_0=\max\{|\Delta \tilde\chi(x)|:x\in
\mathbf{U}\setminus w^{-1}(\Gamma_{\mu/2})\}$. Then $$|\Delta \tilde \chi|
\le
a_1|\nabla \tilde\chi|^2+b_1+b_0.$$ Thus the conditions of the following
Lemma~\ref{heb} are satisfied.

\begin{lemma}[Heinz-Berenstein]\label{heb}\cite[Theorem~$4^\prime$]{EH}. Let $\chi: \overline{\mathbf{R}} \mapsto \mathbf{R}$
be a continuous function between the unit disc
$\overline{\mathbf{U}}$ and the real line satisfying the conditions:
\begin{enumerate}\item $\chi$ is $C^2$ on ${\mathbf{U}}$,
\item $\chi_b(\theta)= \chi(e^{i\theta})$ is $C^2$ and
\item $ |\Delta \chi| \leq a |\bigtriangledown \chi|^2+b $ on $\mathbf{U}$ for some constant $c_0$.
\end{enumerate} Then the gradient $|\bigtriangledown \chi|$ is bounded on
$\mathbf{U}$
\end{lemma}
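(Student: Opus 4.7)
The plan is a Bernstein-type gradient estimate preceded by a reduction to zero boundary data and completed by a boundary barrier. First, decompose $\chi = h + g$, where $h$ is the Poisson integral of $\chi_b$ on $\mathbf{U}$. Since $\chi_b \in C^2(\partial\mathbf{U})$, standard Poisson-kernel estimates yield $h \in C^{1,\alpha}(\overline{\mathbf{U}})$ for every $\alpha \in (0,1)$, so $M_1 := \sup_{\overline{\mathbf{U}}}|\nabla h|<\infty$ depends only on $\|\chi_b\|_{C^2}$. Setting $g = \chi - h$, one has $g\in C(\overline{\mathbf{U}})$, $g|_{\partial\mathbf{U}} = 0$, $|g|\le M_0$, and
\[
|\Delta g| = |\Delta \chi| \le a|\nabla \chi|^2 + b \le 2a|\nabla g|^2 + (2aM_1^2 + b) =: A|\nabla g|^2 + B.
\]
The problem thus reduces to bounding $|\nabla g|$ on $\mathbf{U}$ under this hypothesis together with the $L^\infty$ bound on $g$.

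Second, I run a Bernstein argument on the auxiliary function $\Phi(z) = |\nabla g(z)|^2 e^{-\mu g(z)}$, with $\mu>0$ to be chosen. Using the Bochner identity $\Delta|\nabla g|^2 = 2|D^2 g|^2 + 2\langle \nabla g, \nabla \Delta g\rangle$ together with $|\Delta g|\le A|\nabla g|^2 + B$ and Cauchy--Schwarz to absorb the Hessian term, a direct computation yields, at any point where $\nabla \Phi = 0$,
\[
\Delta \Phi \;\ge\; e^{-\mu g}\bigl[(\mu^2 - 2A\mu)|\nabla g|^4 - C_1|\nabla g|^2 - C_2\bigr],
\]
with $C_1,C_2$ depending only on $A,B,\mu,M_0$. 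Choosing $\mu > 2A$ makes the leading coefficient strictly positive, so at an interior maximum $z_0$ of $\Phi$ the condition $\Delta\Phi(z_0)\le 0$ forces $|\nabla g(z_0)|^2\le K$ for an explicit $K$, hence $\Phi(z_0)\le K e^{\mu M_0}$, and therefore $\Phi$ is bounded on all of $\mathbf{U}$ provided the maximum is attained in the interior.

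Third, I bound $|\nabla g|$ near $\partial \mathbf{U}$ by a barrier argument. Using $\chi_b \in C^2$, construct upper and lower comparison functions near $\partial \mathbf{U}$ of the form $h \pm K(1-|z|^2)$ and apply the Hopf-type boundary-point lemma to the inequality $|\Delta g|\le A|\nabla g|^2 + B$ to bound $|\partial_\nu g|$ on $\partial\mathbf{U}$. Since $g\equiv 0$ on $\partial \mathbf{U}$, the tangential derivative of $g$ vanishes there, so this already bounds $|\nabla g|$ on $\partial\mathbf{U}$; by continuity, $\Phi$ is therefore controlled on an annular neighborhood of $\partial \mathbf{U}$. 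Combining this with the interior Bernstein bound of step two yields $|\nabla g|$, and consequently $|\nabla \chi| \le |\nabla g| + M_1$, uniformly bounded on $\mathbf{U}$.

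The main obstacle is the interior Bernstein computation, because the hypothesis $|\Delta g|\le A|\nabla g|^2+B$ is critical with respect to the natural scaling: the weight $e^{-\mu g}$ has to be tuned so that the quartic contribution $\mu^2|\nabla g|^4 e^{-\mu g}$ arising from $\nabla e^{-\mu g}\cdot \nabla |\nabla g|^2$ strictly dominates the competing quartic contribution $2A\mu|\nabla g|^4 e^{-\mu g}$ coming from $\mu e^{-\mu g}\langle \nabla g,\nabla \Delta g\rangle$. It is precisely this absorption step that forces one to exploit both the a priori $L^\infty$ bound on $g$ (which controls $e^{\mu M_0}$) and the quadratic form of the right-hand side; relaxing either ingredient would break the scheme.
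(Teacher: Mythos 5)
Your step one (splitting off the Poisson extension of the $C^2$ boundary data to reduce to zero boundary values) is sound, but the core of your argument fails at step two, and the failure is structural, not cosmetic. The Bochner identity introduces the term $2\langle\nabla g,\nabla\Delta g\rangle$, and the hypothesis of the lemma is a pointwise \emph{inequality} $|\Delta g|\le A|\nabla g|^2+B$, not an equation: it gives no information whatsoever about $\nabla\Delta g$, which can be arbitrarily large while $|\Delta g|$ stays small (think of $\Delta g$ oscillating with tiny amplitude and huge frequency); moreover $\chi$ is only assumed $C^2$, so $\nabla\Delta g$ need not even exist. In the classical Bernstein scheme this term is handled by substituting from an actual equation $\Delta u=f(x,u,\nabla u)$ with differentiable $f$ and absorbing $f_p\,D^2u\,\nabla u$ into $|D^2u|^2$; a differential inequality cannot be differentiated, so your displayed lower bound for $\Delta\Phi$ is unobtainable. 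The sign bookkeeping is also against you: for $\Phi=|\nabla g|^2e^{-\mu g}$, writing $v=|\nabla g|^2$, a direct computation at a critical point of $\Phi$ (where $\nabla v=\mu v\nabla g$) gives
\begin{equation*}
\Delta\Phi=e^{-\mu g}\left(\Delta v-\mu^2v^2-\mu v\,\Delta g\right),
\end{equation*}
so the exponential weight produces a \emph{negative} quartic term $-\mu^2v^2$, not the positive $(\mu^2-2A\mu)v^4$-type coefficient you claim; and since $|\Delta g|$ has only an upper bound, the Hessian term $2|D^2g|^2\ge(\Delta g)^2$ contributes no compensating positive quartic either. Step three has an independent gap: you invoke the Hopf lemma and assert that the tangential derivative of $g$ vanishes on $\partial\mathbf{U}$ and that "by continuity" $\Phi$ is controlled on an annular neighborhood — but $\nabla g$ is not known to be continuous (or even bounded) up to $\partial\mathbf{U}$; that is essentially the conclusion of the lemma. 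A correct barrier argument (which in this quadratic-growth setting first requires an exponential substitution such as $e^{\lambda g}$ with $\lambda>A$ to restore the comparison principle) yields only the decay $|g(z)|\le C\,\mathrm{dist}(z,\partial\mathbf{U})$; converting that into a gradient bound needs a quantitative interior estimate applied in disks $D(z,d(z)/2)$, which is exactly what your step two failed to supply.

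For comparison: the paper itself offers no proof of this lemma — it is quoted from Heinz \cite[Theorem~$4'$]{EH}, whose argument is not a maximum-principle/Bernstein computation at all, but proceeds through Green--Pompeiu type integral representations of $g$ and its first derivatives, potential-theoretic estimates for the quadratic right-hand side, and an absorption/iteration step combining the interior estimates with the boundary decay furnished by the $C^2$ boundary data. That route deliberately avoids differentiating the right-hand side, which is precisely why it succeeds under the mere inequality $|\Delta\chi|\le a|\nabla\chi|^2+b$ and $C^2$ (not $C^3$) regularity. If you want a proof in the spirit of your outline, you would need to replace step two by such integral estimates (or by an equation-based approximation argument justified in detail), since the pointwise Bernstein mechanism is unavailable here.
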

The conclusion is that
$\nabla \tilde\chi$ is bounded.
Now Lemma~\ref{helpa} implies that there is a constant $C>0$ so that
\begin{equation}\label{11}|D w|\le C, \ \  \ z\in w^{-1}(\Gamma_{\mu/2}).\end{equation}
In order to deal with the inner boundary of $\Omega$ assume without loss of generality that $0\in \Omega_2$.
Now if $W(z)=1/\overline{w(r/\bar z)}$, then after straightforward calculation
\begin{equation}\label{Ww}\Delta W(z)=r^2\frac {(2 (\overline {w} _x)^2 +
    2 (\overline {w} _y)^2 - \overline {w}\overline {\Delta w})} {|
   z | ^2 \bar w^3} =r^2\frac {(8\overline{ w_{z}\cdot w_{\bar z}} - \overline {w}\overline {\Delta w})} {|
   z | ^2 \bar w^3}.\end{equation}
      Further  \begin{equation}\label{ww}\|DW\|=\frac{r \|Dw\|}{|z|^2|w|^2}\end{equation} and \begin{equation}\label{jj}J(z,W)=\frac{r^2 J(1/\bar z,w) }{|z|^4|w|^4},\end{equation} we get that $$|\Delta W(z)|\le a_2 \|DW\|^2 +b_2,  \ \ z\in \Omega'.$$ Further $W$ is $(K_1, K_1')$ quasiconformal with

   $$\|DW\|^2=\frac{r^2 \|Dw\|^2}{|z|^4|w|^4}\le \frac{2r^2 K\cdot J(1/\bar z,w)+r^2 K'}{|z|^4 |w|^4}\le K_1 J(z,W)+K_1'.$$

   Moreover $W$ maps $\A(r,1)$ onto $\Omega'=\Omega'_1\setminus \Omega'_2$.  By proceeding as in the first part we get that the mapping $\xi(z)=-d_1(W(z))=-\mathrm{dist}(W(z),\partial \Omega' _1)$ is Lipschitz near $\mathbf{T}\subset \partial \A(1,r).$
   Then again in view of Lemma~\ref{helpa} we conclude that $|DW|$ is bounded in $W^{-1}(\Gamma'_{\mu/2})$, where $\Gamma'_\sigma=\{z\in \Omega': \mathrm{dist}(z,\partial\Omega'_1)<\sigma\}$.
   Thus by \eqref{ww} there exists $\epsilon>0$ and $C_1>0$ so that \begin{equation}\label{22}|Dw(z)|\le C_1 , \
    \ \ r<|z|<r+\epsilon.\end{equation}

   Since $w$ is smooth in $\A(r,1)$ in view of \eqref{11} and \eqref{22}  we conclude that $w$ has a Lipschitz extension to $\overline{\A(r,1)}$.

In order to deal with the arbitrary  domain $D$ with $C^2$ boundary, we make use of the following Kellogg type result that follows from \cite[Theorem~3.1]{jost}.
\begin{proposition}
Suppose that $D$ is a double connected bounded by two Jordan curves of class $C^{1,\alpha}$ and assume that $r=\exp(-\mathrm{Mod}(D))$.
Then there exists a conformal diffeomorphism $\tau: D\to \A(r,1)$ which is $C^{1,\alpha}$ up to the boundary together with its inverse.
In particular $\tau$ is bi-Lipschitz.

\end{proposition}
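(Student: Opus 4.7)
The plan is to split the proposition into two independent pieces: \emph{existence} of a conformal equivalence $\tau\colon D \to \A(r,1)$ with the prescribed $r = \exp(-\mathrm{Mod}(D))$, and the \emph{boundary regularity} statement that $\tau\in C^{1,\alpha}(\overline D)$ together with its inverse, from which bi-Lipschitzness is automatic by the chain rule and compactness.

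For existence I would invoke the classical uniformisation of doubly connected planar domains: every such $D$ is conformally equivalent to a unique round annulus, and the inner radius is determined by the conformal modulus. Concretely, writing $D=\Omega_1\setminus\overline{\Omega_2}$, one solves the Dirichlet problem for a harmonic function $u$ on $D$ with $u=0$ on $\partial\Omega_1$ and $u=1$ on $\partial\Omega_2$, picks a (multivalued) harmonic conjugate $v$, and sets $\tau=\exp(-(u+iv)\cdot\mathrm{Mod}(D))$. The value of $\mathrm{Mod}(D)$ is precisely the scaling that makes the period of $v\cdot\mathrm{Mod}(D)$ around $\partial\Omega_2$ equal to $2\pi$, and in this normalisation $|\tau|$ indeed sweeps out $(r,1)$ with $r=\exp(-\mathrm{Mod}(D))$.

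For boundary regularity I would argue locally near each boundary point $p\in\partial D$ via Schwarz reflection. The target $\A(r,1)$ has real-analytic boundary, so the antiholomorphic maps $w\mapsto 1/\bar w$ and $w\mapsto r^2/\bar w$ reflect it across its outer and inner circles respectively. On the $D$ side, the hypothesis $\partial D\in C^{1,\alpha}$ supplies, in a neighbourhood of $p$, a $C^{1,\alpha}$ chart that straightens the boundary to a real segment. Conjugating $\tau$ by such straightening charts on the source and by a standard linearising chart on the target reduces the problem to a conformal map between two simply connected half-neighbourhoods bounded by $C^{1,\alpha}$ Jordan arcs, to which the classical Kellogg--Warschawski theorem applies and gives $C^{1,\alpha}$ regularity up to this portion of the boundary. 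A finite covering of $\partial D$ by such charts then yields $\tau\in C^{1,\alpha}(\overline D)$.

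The main obstacle is controlling the inverse, i.e.\ ruling out $\tau'=0$ on $\partial D$ so that $\tau^{-1}\in C^{1,\alpha}(\overline{\A(r,1)})$. This is the sharper half of Kellogg's theorem: a Privalov--type argument shows that $\log\tau'$ admits a bounded harmonic extension with H\"older-continuous boundary values, whence $|\tau'|$ stays bounded away from both $0$ and $\infty$ on $\partial D$. Once both $\tau$ and $\tau^{-1}$ are $C^{1,\alpha}$ on compact closures their derivatives are uniformly bounded, and $\tau$ is bi-Lipschitz as asserted.
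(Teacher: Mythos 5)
Your argument is necessarily on a different track from the paper's, because the paper offers no proof at all: the proposition is introduced as a ``Kellogg type result that follows from \cite[Theorem~3.1]{jost}'', and that citation is the entire justification. What you supply is the standard classical-function-theory proof, and its skeleton is sound: existence of the conformal equivalence with the correct inner radius $r=\exp(-\mathrm{Mod}(D))$ via the harmonic measure $u$ and the single-valued branch of $\exp(-\mathrm{Mod}(D)(u+iv))$; boundary regularity by reduction to Kellogg--Warschawski; and non-vanishing of $\tau'$ on $\partial D$ by the Privalov conjugate-function argument, which is exactly what controls $\tau^{-1}$ and yields the bi-Lipschitz conclusion. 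This buys the reader a self-contained, checkable argument where the paper has only a pointer, and it correctly isolates the delicate half of the statement, namely $\inf_{\partial D}|\tau'|>0$.

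One step does not work as written. You propose to ``conjugate $\tau$ by straightening charts'' that flatten the $C^{1,\alpha}$ boundary to a segment; but such a chart is a $C^{1,\alpha}$ diffeomorphism, not a conformal map, so the conjugated map is no longer holomorphic and Kellogg--Warschawski cannot be applied to it. (If you instead require the chart to be conformal, you are presupposing boundary regularity of a Riemann map onto a $C^{1,\alpha}$ half-neighbourhood, which is essentially the statement being proved, although for simply connected pieces it is the classical global Kellogg theorem and may be taken as known.) The standard repair avoids straightening altogether: given $p\in\partial D$, choose a subarc $\gamma\subset\partial D$ containing $p$ and a simply connected Jordan subdomain $U\subset D$ with $\gamma\subset\partial U$ and $\partial U\in C^{1,\alpha}$; let $\phi:\mathbb{D}\to U$ be a Riemann map, which is $C^{1,\alpha}$ up to the boundary with non-vanishing derivative by the global simply connected Kellogg theorem. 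Then $\tau\circ\phi$ is conformal on $\mathbb{D}$ and carries the arc $\phi^{-1}(\gamma)$ into the circle $|w|=1$ (or $|w|=r$), so the local form of the Kellogg--Warschawski theorem (see \cite{pom}, Theorems~3.5--3.6) applies directly and gives that $(\tau\circ\phi)'$ extends H\"older-continuously and without zeros along that arc; dividing by $\phi'$ recovers the claim for $\tau$ near $p$, and a finite cover of $\partial D$ finishes. With that substitution your proof is complete.
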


Assume now that  $w: D\onto \Omega$ is a harmonic diffeomorphism that minimizes the the $\rho-$energy, where $D$ is not a circular annuli.
Then there exists a conformal mapping $\tau$ of $\A(r,1)$ onto $D$ which is bi-Lipschitz continuous.
Here $r=\exp(-\mathrm{Mod}(D))$. Then the mapping $\zeta(z)=w(\tau(z)):\A(1,r)\onto\Omega$ is a minimizer
that minimizes the $\rho-$energy and by the first part of the proof it has Lipschitz continuous extension up to the boundary.
 Now we conclude that $w$ is Lipschitz continuous up to the boundary and this finishes the proof.
\end{proof}
\begin{remark}
Let $f(z)=\int_0^z \frac{dw}{\sqrt{1-w^4}}$ be a conformal mapping of the unit disk onto a square.
Then $f$ is a conformal mapping of the annulus $\A(1/2,1)$ onto the doubly connected,
whose outer boundary is not smooth. We know that $f$ is a minimiser of energy but is not Lipschitz.
 With some more effort, by using e.g. \cite{les} we can define a conformal mapping between the circular annulus and an annulus with $C^1$
 boundary so that it is not Lipschitz up to the boundary. This in turn implies that the condition for the annuli to have $C^2$ boundary is essential.
  It seems that we can weaken the hypothesis on smoothness of the boundary, but we didn't make a serious effort in this direction.
   Further an Euclidean harmonic diffeomorphism $f$ of the unit disk $\mathbf{D}$ onto itself is seldom  a
    Lipschitz continuous up to the boundary. We cite here an important result of Pavlovi\'c \cite{MP} which states that harmonic diffeomorphism of the unit disk is
    Lipschitz if it is quasiconformal. Further for such a non-Lipschitz  $f$, let $R<1$.
    Then  the set $D=f^{-1}(\A(R,1))$ is a doubly-connected surface with $C^{\infty}$ boundary. Let $\varphi$ be a conformal mapping of the annulus $\A(r,1)$
     onto $D$. Then $F=f\circ\varphi$ is a harmonic diffeomorphism between $\A(r,1)$ onto $\A(R,1)$ which is not Lipschitz continuous. This observation tells
      us that there exists a  crucial difference between the Noether harmonic diffeomorphisms  and those harmonic diffeomorphisms between annuli which are not Noether harmonic.
\end{remark}

In the next subsection we get precise estimate for the case of radial metric and circular annuli and finish the last part of main theorem.
\subsection{Lipschitz continuity of minimisers for circular annuli}\label{sec2} 
Assume that $\rho:[r,1]\to (0,+\infty)$ is a smooth mapping with $\rho(s)\ge 1/M>0$. Then it defines the radial metric also denoted by $\rho$ in $\A(r,1)$, $\rho(z)=\rho(|z|)$. Then in \cite{klondon} the author calculated the class of all $\rho-$minimisers between annuli $\A(r,1)$ and $\A(\tau, \sigma)$. They are up to the rotation given by
\begin{equation}\label{w}w(se^{it}) =
p(s)e^{it}=q^{-1}(s)e^{it},\end{equation} where
\begin{equation}\label{var}q(y) =\exp\left(\int_{Q}^y
\frac{dy}{\sqrt{y^2+c\rho^{-1}(y) }}\right), \ q\le y\le Q,
\end{equation} and $c$ is a constant satisfying the condition:
\begin{equation}\label{unt} c\ge -y^2\rho(y), \;\text{for} \; q\le y\le Q.
\end{equation} Then $w$ is a $\rho$-harmonic mapping between annuli
$\A=\A(r,1)$ and $\A^*=\A^*(q,Q)$, where
\begin{equation}\label{rr}r=\exp\left(\int_{Q}^q \frac{dy}{\sqrt{y^2+c\rho^{-1}(y)
}}\right).\end{equation} The harmonic mapping $w$ is normalized by
$$w( e^{it})=Q e^{it}.$$ The mapping $w=h^c(z)$ is a diffeomorphism.
Further \begin{equation}\begin{split}|D w|&\bydef \max\{|Dw(z) h: |h|=1\}\\&=|\partial_z w|+|\partial_{\bar z} w|\\&=\frac{{\abs{p(s)+s p'(s)}}+\abs{p(s)-s p'(s)}}{2 s}\\&=\max\left\{\frac{p(s)}{s}, p'(s)\right\},\end{split}\end{equation}
and
\begin{equation}\begin{split}l(D w)&\bydef \min\{|Dw(z) h|: |h|=1\}\\&=|\partial_z w|-|\partial_{\bar z} w|\\&=\frac{\abs{p(s)+s p'(s)}-\abs{p(s)-s p'(s)}}{2 s}\\&=\min\left\{\frac{p(s)}{s}, p'(s)\right\}.\end{split}\end{equation}
By using \eqref{var} we get
\begin{equation}\label{pprim}p'(s)=\frac{1}{q'(p(s))}=\frac{\sqrt{p(s)^2+c\rho^{-2}(p(s)) }}{s}.\end{equation}
Thus \begin{equation}\label{ddd}\begin{split}|D w(se^{it})|&=\frac{\sqrt{p(s)^2+\max\{c,0\}\rho^{-1}(p(s)) }}{s}\\&\le \frac{\sqrt{Q^2+\max\{c,0\}M^2 }}{r}<\infty.\end{split}\end{equation}
This implies that $w$ is Lipschitz continuous on $\A(r,1)$. On the other hand we have that
$$l(D w)=\frac{\sqrt{p(s)^2+c\rho^{-1}(p(s)) }}{s}.$$  Thus for $c\ge 0$ we have

 \begin{equation}\label{qqq}
 l(D w)\ge q>0.
 \end{equation}
  Now \eqref{ddd} and \eqref{qqq} imply that $w$ is bi-Lipschitz in $\overline{\A(r,1)}$ for $c\ge 0$. However it is not bi-Lipschitz in general, i.e. for $c<0$.
  Indeed $l(D w)$ can be equal to zero for \begin{equation}\label{ccirc}c=c_\circ=-\min\{p^2(s)\rho(p(s)):s\in[r,1]\}.\end{equation}
  It should be noted that the condition $c\ge 0$, in view of \eqref{rr}  is equivalent with the condition

\begin{equation}\label{momo}\mathrm{Mod}(D)\le \mathrm{Mod}(\Omega).\end{equation}

 The minimiser $w$ is not bi-Lipschitz for the so-called critical J. C. C. Nitsche configuration of annuli: $\A(r_\circ,1)$ and $\A(q,Q)$, where
 \begin{equation}\label{rciri}r_\circ=\exp\left(\int_{Q}^q \frac{dy}{\sqrt{y^2+c_\circ\rho^{-1}(y)
}}\right).\end{equation}
 In particular for Eulcidean metric we have the following critical J. C. C. Nitsche configuration of annuli. For $0<r<1$ the mapping
\begin{equation}\label{eqp}w(z)=\frac{r^2+|z|^2}{\bar z(1+r^2)}\end{equation} is a harmonic minimiser (see \cite{AIM}) of the Euclidean energy of
mappings between $\mathbb{A}(r,1)$ and $\mathbb{A}(\frac{2r}{1+r^2},1)$, however $|w_z|=|w_{\bar z}|=\frac{1}{1+r^2}$ for $|z|=r$, and so $w$ is not
bi-Lipschitz. Those two annuli make the so-called critical configuration of annuli. Those configurations are important in framework of J. C. C.
Nitsche conjecture solved by Iwaniec, Kovalev and Onninen in \cite{IKO2} after some partial results given by Lyzzaik \cite{L}, Weitsman \cite{W} and Kalaj
\cite{isrg}.

The following conjecture is motivated by the previous observation.
\begin{conjecture}
 Assume that $D$ and $\Omega$ are doubly connected domains with smooth boundaries.
 Assume that $\rho$ is a smooth non-vanishing metric defined in the closure of $\Omega$. If $\mathrm{Mod}(D)\le\mathrm{Mod}(\Omega)$
 then the minimiser of $\rho-$energy is globally bi-Lipschitz continuous and has smooth extension up to the boundary.
\end{conjecture}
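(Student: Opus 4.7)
My plan is to first reduce to the circular annulus case via the Kellogg-type smooth conformal reparametrization already used in the proof of Theorem~\ref{mainexist}: when $D$ has $C^\infty$ boundary, the conformal map $\tau\colon\A(r,1)\onto D$ extends to a $C^\infty$-diffeomorphism of closures together with its inverse, so that bi-Lipschitz regularity and smoothness of the minimiser transfer in both directions. I therefore study the Noether harmonic minimiser $g\colon\A(r,1)\onto\Omega$ whose Hopf differential is $\rho(g(z))g_z\overline{g_{\bar z}}=c/z^2$; by the analysis of subsection~\ref{sec2}, and in particular the equivalence~\eqref{momo}, the hypothesis $\Mod(D)\leq\Mod(\Omega)$ is exactly the condition $c\geq 0$.

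For the bi-Lipschitz assertion, the upper bound $|Dg|\leq C$ up to the boundary is already supplied by Theorem~\ref{mainexist}. The normal/tangential decomposition from the proof of Lemma~\ref{popi} gives $|g_N|^2-|g_T|^2=4c/(\rho(g)|z|^2)\geq 0$ together with $J(z,g)=|g_N||g_T|$, and since $g_N\perp g_T$ the singular values of $Dg$ are $|g_T|\leq|g_N|$. Consequently
$$l(Dg)=|g_T|=\frac{J(z,g)}{|g_N|}\geq \frac{J(z,g)}{C},$$
so bi-Lipschitz control reduces to a uniform positive lower bound for $J(z,g)$ on $\overline{\A(r,1)}$.

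To obtain smoothness (and to make the identity above available up to the boundary) I would appeal to boundary regularity for the semilinear elliptic system~\eqref{heli} with free boundary on a smooth Jordan curve. Locally near a boundary arc of $\partial\Omega_j$ one flattens that arc by a smooth diffeomorphism of the target; the pulled-back minimiser still satisfies a semilinear elliptic system with smooth coefficients and now maps into a straight line. Starting from the Lipschitz bound of Theorem~\ref{mainexist}, a standard Calder\'on--Zygmund / Schauder bootstrap yields $g\in W^{2,p}$, then $C^{1,\alpha}$, and iteratively $C^{k,\alpha}$ for every $k$ up to $|z|=1$; the argument near $|z|=r$ is identical. This upgrades $g$ to $C^\infty(\overline{\A(r,1)})$, and the Hopf identity $|g_N|^2-|g_T|^2=4c/(\rho|z|^2)$ then holds classically on the closed annulus.

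The main obstacle will be excluding zeros of $J(z,g)$ on the two boundary circles. If $c=0$, $g$ is conformal and the Kellogg--Warschawski theorem furnishes bi-Lipschitz smoothness directly. If $c>0$, the Hopf differential $c/z^2$ is non-vanishing on $\overline{\A(r,1)}$, and by the classical dictionary between zeros of the Hopf differential and branch points of harmonic mappings one expects $g$ to have no branch points up to the boundary, hence $J(z,g)>0$ everywhere. Executing this rigorously is the heart of the matter: I would derive a uniformly elliptic differential inequality for $J(z,g)$ from~\eqref{heli}, apply Hopf's boundary-point lemma at any putative boundary minimum where $J=0$, and combine the resulting contradiction with the topological fact that the boundary restrictions of $g$ are degree~$\pm 1$ homeomorphisms onto the corresponding components of $\partial\Omega$ (so the only way for $g_T$ to vanish is at a fold, which the boundary-point lemma excludes once $|g_N|\geq 2\sqrt{c/\rho}>0$). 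This degeneration is precisely what occurs at the critical Nitsche configuration of subsection~\ref{sec2}, where $c=c_\circ<0$ and bi-Lipschitz regularity breaks down, making the sign condition $c\geq 0$ both sufficient and sharp for the conjectured conclusion.
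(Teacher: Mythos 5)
You are addressing a statement that the paper itself leaves open: it appears as a \emph{conjecture}, and the paper proves only the Lipschitz bound (Theorem~\ref{mainexist}) plus the bi-Lipschitz property in the special case of radial metrics on circular annuli with $c\ge 0$ (subsection~\ref{sec2}). So there is no proof to compare against, and the question is whether your sketch actually closes the gap; it does not, for two concrete reasons. First, your opening reduction --- ``by \eqref{momo} the hypothesis $\Mod(D)\le\Mod(\Omega)$ is exactly the condition $c\ge 0$'' --- is established in the paper only for radial metrics $\rho(|w|)$ and circular target annuli, where it follows from the explicit formula \eqref{rr}. For a general smooth metric and a general doubly connected $\Omega$, nothing in the paper (or in your argument) ties the sign of the Hopf constant $c$ in \eqref{hopf1} to the moduli comparison; proving that implication is itself a generalized Nitsche-type statement and is part of what makes this a conjecture. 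Without it, your case analysis $c=0$ versus $c>0$ never engages the actual hypothesis $\Mod(D)\le\Mod(\Omega)$. Second, the decisive analytic step --- a uniform positive lower bound for $J(z,g)$ on the closed annulus --- is exactly the step you defer (``the heart of the matter''). The topological input you invoke is insufficient: the boundary restriction of $g$ is merely a homeomorphism of circles, and a circle homeomorphism can have vanishing tangential derivative at a point, so degree $\pm1$ does not exclude $g_T=0$, i.e. $J=|g_N||g_T|=0$, on the boundary. Your proposed Hopf boundary-point lemma would require exhibiting an elliptic differential inequality for $J$ (or $\log J$) with controlled sign; for $\rho$-harmonic maps the equation for $\log J$ involves the Gauss curvature of $\rho$, which you have not controlled, and no such inequality is written down. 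Note also that nonvanishing of the Hopf differential does not by itself prevent $J$ from vanishing at the boundary --- at a putative boundary zero one simply gets $|g_N|^2=4c/(\rho|z|^2)>0$ with $g_T=0$, which is internally consistent.

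A smaller but genuine issue is the regularity bootstrap: the boundary condition here is \emph{free} (the map slips along $\partial\Omega$; only $d_1(g(e^{i\theta}))=0$ is prescribed, together with the Hopf differential being real on the boundary circles), not Dirichlet, so a ``standard Calder\'on--Zygmund/Schauder bootstrap'' after flattening does not apply verbatim; one must invoke free-boundary regularity theory for harmonic maps (in the spirit of Jost) with the mixed condition made explicit. On the positive side, several ingredients of your plan are correct and consistent with the paper: the orthogonality $\re(\overline{g_N}g_T)=0$ from \eqref{hopf2b} does give $l(Dg)=|g_T|$ and $|Dg|=|g_N|$ when $c\ge0$, matching Lemma~\ref{popi}; the reduction to $\A(r,1)$ via the Kellogg-type proposition is sound; the case $c=0$ is indeed settled by Kellogg--Warschawski; and your identification of the critical Nitsche configuration \eqref{rciri} as the sharpness mechanism agrees with the paper's motivation for the conjecture. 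But as written, the proposal assumes without proof the equivalence between the modulus inequality and $c\ge0$, and postpones the boundary nondegeneracy of $J$ --- precisely the two obstacles that keep this statement a conjecture.
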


\end{document}